\numberwithin{equation}{section} \allowdisplaybreaks
\begin{document}
\newtheorem{theorem}{Theorem}[section]
\newtheorem{defin}{Definition}[section]
\newtheorem{prop}{Proposition}[section]
\newtheorem{corol}{Corollary}[section]
\newtheorem{lemma}{Lemma}[section]
\newtheorem{rem}{Remark}[section]
\newtheorem{example}{Example}[section]
\title{Generalized para-K\"ahler manifolds}
\author{{\small by}\vspace{2mm}\\Izu Vaisman}
\date{}
\maketitle
{\def\thefootnote{*}\footnotetext[1]%
{{\it 2010 Mathematics Subject Classification: 53C15} .
\newline\indent{\it Key words and phrases}: generalized geometry; generalized para-Hermitian structure; generalized para-K\"ahler structure.}}
\begin{center} \begin{minipage}{12cm}
A{\footnotesize BSTRACT. We define a generalized almost para-Hermitian structure to be a commuting pair $(\mathcal{F},\mathcal{J})$ of a generalized almost para-complex structure and a generalized almost complex structure with an adequate non-degeneracy condition. If the two structures are integrable the pair is called a generalized para-K\"ahler structure. This class of structures contains both the classical para-K\"ahler structure and the classical K\"ahler structure. We show that a generalized almost para-Hermitian structure is equivalent to a triple $(\gamma,\psi,F)$, where $\gamma$ is a (pseudo) Riemannian metric, $\psi$ is a $2$-form and $F$ is a complex $(1,1)$-tensor field such that $F^2=Id,\gamma(FX,Y)+\gamma(X,FY)=0$. We deduce integrability conditions similar to those of the generalized K\"ahler structures and give several examples of generalized para-K\"ahler manifolds. We discuss submanifolds that bear induced para-K\"ahler structures and, on the other hand, we define a reduction process of para-K\"ahler structures.}
\end{minipage}
\end{center} \vspace{5mm}
\section{Introduction}
The framework of this paper is the $C^\infty$-category and the notation follows \cite{KN}, with the exception of the wedge product evaluation, where we follow Cartan (i.e., $\alpha\wedge\beta(X,Y)=\alpha(X)\beta(Y)-\alpha(Y)\beta(X)$ for $1$-forms, etc.).

Generalized geometry on a manifold $M$, as defined by Hitchin, is the geometry of structures of the big tangent bundle $\mathbf{T}M=TM\oplus T^*M$ endowed with the pairing metric
$$g(\mathcal{X},\mathcal{Y})=\frac{1}{2}(\alpha(Y)+\beta(X)),\; \mathcal{X}=(X,\alpha),\mathcal{Y}=(Y,\beta)\in TM\oplus T^*M$$
(e.g., see \cite{Galt}, which we also follow for terminology and notation).

The interpretation of the notion of a classical almost para-Hermitian structure in terms of generalized geometry leads us to define a generalized almost para-Hermitian structure to be a commuting pair $(\mathcal{F},\mathcal{J})$, where $\mathcal{J}$ is a generalized almost complex structure and $\mathcal{F}$ is a generalized almost para-complex structure \cite{{Galt},{IV}}, with an adequate non-degeneracy condition. Furthermore, if the two structures $(\mathcal{F},\mathcal{J})$ are integrable, the structure will be called a generalized para-K\"ahler structure. The integrability of $\mathcal{F}$ and $\mathcal{J}$ means that the eigenbundles of each of these endomorphisms are closed under the Courant bracket
$$
[(X,\alpha),(Y,\beta)]=([X,Y],L_X\beta-L_Y\alpha
+\frac{1}{2}d(\alpha(Y)-\beta(X)).$$
The Courant bracket \cite{C} replaces the Lie bracket in integrability conditions of generalized geometry.

In this paper, we apply the techniques of generalized K\"ahler geometry \cite{Galt} and obtain corresponding results for generalized para-K\"ahler structures.

We show that a generalized almost para-Hermitian structure is equivalent to a triple $(\gamma,\psi,F)$, where $\gamma$ is a (pseudo) Riemannian metric on $M$, $\psi$ is a $2$-form and $F\in End\,T^cM$ (the index $c$ denotes complexification) is a complex $(1,1)$-tensor field such that $F^2=Id,\gamma(FX,Y)+\gamma(X,FY)=0$. As a consequence, we show that the classical almost para-Hermitian and almost (pseudo) Hermitian manifolds have a generalized almost para-Hermitian structure. We also show that a generalized almost para-Hermitian structure is equivalent to a decomposition
$ \mathbf{T}^cM=\mathbf{H}_+\oplus\mathbf{H}_-\oplus \bar{\mathbf{H}}_+\oplus\bar{\mathbf{H}}_- $, where $\mathbf{H}_+\oplus\overline{\mathbf{H}}_+, \mathbf{H}_-\oplus\overline{\mathbf{H}}_-,\mathbf{H}_+\oplus\overline{\mathbf{H}}_-$ are maximal $g$-isotropic subbundles (the bar denotes complex conjugation).

We show that the integrability of the structure is equivalent to the closure of the subbundles $\mathbf{H}_\pm$ under the Courant bracket. Then, we prove that a generalized para-K\"ahler structure is characterized by the involutivity of the eigenbundles of the endomorphism $F$ together with either a certain expression of the covariant derivative $\nabla^\gamma F$ in terms of $d\psi$ or the equality $d\psi(X,Y,Z)={\rm i}d\omega(FX,FY,FZ)$ ($\nabla^\gamma$ is the Levi-Civita connection of $\gamma$ and ${\rm i}$ is the complex unit). In particular, if $d\psi=0$, we remain with the single condition $\nabla^\gamma F=0$. Among others, we construct a generalized para-K\"ahler structure on $\mathds{C}^2$ that projects to a structure of the complex $2$-torus and a generalized para-K\"ahler structure on a circle bundle over the classical paracomplex projective model \cite{GA}. Furthermore, we show that the complete lift of a generalized para-K\"ahler structure of a manifold $M$ to the tangent manifold $TM$ is a generalized para-K\"ahler structure of $TM$. Then, we briefly discuss a cohomology of the Dolbeault type.

In the last section, we discuss submanifolds that get an induced structure. These are the submanifolds $N$ of $M$ such that the tensor induced by $\gamma$ is non degenerate and $T^cN$ is invariant by $F$. Another characterization of these submanifolds is that the pullbacks of the subbundles $\mathbf{H}_\pm$ decompose $\mathbf{T}^cN$ in the way required by a generalized almost para-Hermitian structure. If the structure of $M$ is integrable, the induced structure of $N$ is integrable too.

Finally, we show that, if a structure $(\mathcal{F},\mathcal{J})$ is invariant by a Lie group $G$, a reduction process may exist and we define a momentum map for the complete lift of the $G$-invariant structure of $M$ to $TM$.
\section{Generalized almost para-Hermitian structures}
In classical differential geometry an almost paracomplex structure on a $2n$-dimensional manifold $M^{2n}$ is $F\in End\,TM$ with $F^2=Id$ and with $\pm1$-eigenbundles $S_\pm$ of dimension $n$. A pseudo-Riemannian metric $\gamma$ of $M$ is $F$-compatible if $\gamma(FX,Y)+\gamma(X,FY)=0$; then $S_\pm$ are isotropic and $\gamma$ has signature zero. The pair $(\gamma,F)$ is called an almost para-Hermitian structure. Furthermore, if the $2$-form $\omega(X,Y)=\gamma(X,FY)$ is closed, the structure is almost para-K\"ahler and if, in addition, $S_\pm$ are involutive, the structure is para-K\"ahler. With the musical isomorphisms $\flat_\gamma,\flat_\omega,\sharp_\gamma=\flat_\gamma^{-1},\sharp_\omega=-\flat_\omega^{-1}$, we have
\begin{equation}\label{musicprop} \begin{array}{l}\flat_\gamma\circ F=-F^*\circ\flat_\gamma,\, F\circ\sharp_\gamma=-\sharp_\gamma\circ F^*,\vspace*{2mm}\\ \flat_\omega=F^*\circ\flat_\gamma=-\flat_\gamma\circ F,\,\sharp_\omega=-\sharp_\gamma\circ F^*=F\circ\sharp_\gamma.\end{array}\end{equation}

These objects may be encoded in the endomorphisms
$\mathcal{F},\mathcal{J},\mathcal{H}\in End\,\mathbf{T}M$ given by the action of the matrices
\begin{equation}\label{matrixFJ}\mathcal{F}=\left(\begin{array}{lc}F&0\vspace*{2mm}\\ 0&-F^*\end{array} \right),\mathcal{J}=\left(\begin{array}{cc}0&\sharp_\omega\vspace*{2mm}\\ \flat_\omega&0\end{array} \right),
\mathcal{H}=\mathcal{F}\circ\mathcal{J}=\left(\begin{array}{cc}0&\sharp_\gamma\vspace*{2mm}\\ -\flat_\gamma&0\end{array}\right)\end{equation}
on columns $\left(\begin{array}{c}X\\ \alpha\end{array}\right)$, $X\in TM,\alpha\in T^*M$.
Then, $\mathcal{F},\mathcal{J}$ are skew-symmetric with respect to the pairing metric $g$
and $\mathcal{J}^2=-Id, \mathcal{F}^2=Id$, which means that $\mathcal{J}$ is a generalized almost complex structure \cite{Galt} and $\mathcal{F}$ is a generalized almost paracomplex structure \cite{IV}. Notice also the relations
\begin{equation}\label{propFGH} \mathcal{F}\circ\mathcal{J}=\mathcal{J}\circ\mathcal{F}=\mathcal{H},\,
\mathcal{F}\circ\mathcal{H}=\mathcal{H}\circ\mathcal{F}=\mathcal{J}, \,\mathcal{J}\circ\mathcal{H}=\mathcal{H}\circ\mathcal{J}=-\mathcal{F}.\end{equation}

The endomorphism $\mathcal{H}$ satisfies the properties
\begin{equation}\label{HGsym} \mathcal{H}^2=-Id,\; g(\mathcal{H}\mathcal{X},\mathcal{Y})=g(\mathcal{X},\mathcal{H}\mathcal{Y}),\end{equation} hence, it is not a generalized almost complex structure. We will say that $\mathcal{H}$ is a $g$-{\it symmetric big almost complex structure}. The $\pm{\rm i}$-eigenbundles $\mathbf{H},\bar{\mathbf{H}}$ of $\mathcal{H}$ are not $g$-isotropic, therefore, they cannot be closed under the Courant bracket and integrability of $\mathcal{H}$ is a non-sense condition.
Using $\mathbf{H}=im(Id-{\rm i}\mathcal{H})$, it follows that the non-degeneracy of $\gamma$ is equivalent to $\mathbf{H}\cap T^cM=0$.

This situation suggests the following definition.
\begin{defin}\label{genparaK} {\rm A {\it generalized almost para-Hermitian structure} is a commuting pair $(\mathcal{F},\mathcal{J})$, where $\mathcal{F}$ is a generalized almost paracomplex structure and $\mathcal{J}$ is a generalized almost complex structure, such that the symmetric bivector \begin{equation}\label{gammagen}
\gamma(\alpha,\beta)=-2g(\mathcal{F}(0,\alpha),\mathcal{J}(0,\beta))\end{equation} is non-degenerate.
If $\mathcal{J}$ is integrable, the structure is {\it generalized para-Hermitian}. If $\mathcal{F}$ is integrable, the structure is {\it generalized almost para-K\"ahler}. If both $\mathcal{F}$ and $\mathcal{J}$ are integrable, the structure is {\it generalized para-K\"ahler}.}\end{defin}

For any generalized almost para-Hermitian structure, the operator $\mathcal{H}=\mathcal{J}\circ\mathcal{F}$ is a $g$-symmetric big almost complex structure that satisfies the relations (\ref{propFGH}), (\ref{HGsym}) and we denote by $\mathbf{H}$ the ${\rm i}$-eigenbundle of $\mathcal{H}$.
\begin{prop}\label{gammanondeg} The tensor $\gamma$ defined by (\ref{gammagen}) is non-degenerate iff $\mathbf{H}\cap T^cM=0$.\end{prop}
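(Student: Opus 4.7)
The plan is to reduce the proposition to linear algebra on the block of $\mathcal{H}=\mathcal{J}\circ\mathcal{F}$ from $T^{*c}M$ to $T^cM$, using the description $\mathbf{H}=im(Id-i\mathcal{H})$ together with the identities among the blocks of $\mathcal{H}$ forced by $\mathcal{H}^2=-Id$ and by $g$-symmetry.

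First, I would use that $\mathcal{F}$ is $g$-skew and that $\mathcal{F}\mathcal{J}=\mathcal{H}$ (from (\ref{propFGH})) to rewrite
\[
\gamma(\alpha,\beta)=-2g(\mathcal{F}(0,\alpha),\mathcal{J}(0,\beta))=2g((0,\alpha),\mathcal{H}(0,\beta)).
\]
Decomposing the image as $\mathcal{H}(0,\beta)=(A\beta,B\beta)$ with respect to $\mathbf{T}^cM=T^cM\oplus T^{*c}M$, and applying $g((0,\alpha),(A\beta,B\beta))=\tfrac{1}{2}\alpha(A\beta)$, yields $\gamma(\alpha,\beta)=\alpha(A\beta)$. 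The $g$-symmetry of $\mathcal{H}$ from (\ref{HGsym}) makes this bilinear form symmetric, so $\gamma$ is non-degenerate iff the block $A:T^{*c}M\to T^cM$ is an isomorphism.

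Second, I would describe $\mathbf{H}\cap T^cM$ in the same terms. Matching blocks in $\mathcal{H}^2=-Id$ and using $B=P^*$ (from $g$-symmetry) gives the relations
\[
P^2+AQ=-Id,\quad PA+AP^*=0,\quad QP+P^*Q=0,\quad QA+P^{*2}=-Id,
\]
where $P:T^cM\to T^cM$ and $Q:T^cM\to T^{*c}M$ are the remaining blocks. An element $(X,0)\in\mathbf{H}$ is an $i$-eigenvector of $\mathcal{H}$ in $T^cM$, amounting to $PX=iX$ and $QX=0$. If $\alpha\in\ker A\setminus\{0\}$, then the identity $QA+P^{*2}=-Id$ forces $P^{*2}\alpha=-\alpha$, so over $\mathbb{C}$ there is a $\pm i$-eigenvector of $P^*$ inside $\ker A$; the identity $PA=-AP^*$ then transfers this eigenvector, using the real structure that swaps $\mathbf{H}\leftrightarrow\bar{\mathbf{H}}$ by complex conjugation, to a nonzero element of $\mathbf{H}\cap T^cM$. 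The reverse direction is an analogous, shorter computation.

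The main obstacle is this last transfer step: converting an algebraic element of $\ker A$ in $T^{*c}M$ into a genuine $i$-eigenvector of $\mathcal{H}$ lying in $T^cM$. It is a dimension-counting argument combined with careful application of the four block identities and the real structure of $\mathcal{H}$, rather than a conceptual leap, but it does require some bookkeeping to get the correspondence right.
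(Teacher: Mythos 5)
Your first three steps are correct and essentially reproduce the paper's setup: the identity $\gamma(\alpha,\beta)=2g\bigl((0,\alpha),\mathcal{H}(0,\beta)\bigr)=\alpha(A\beta)$ identifies your block $A$ with $\sharp_\gamma$, and your four block identities are exactly the relations (\ref{proptens}) for $\mathcal{H}$. The gap is the ``transfer step,'' and it is not a bookkeeping issue: it cannot be carried out. From $\alpha\in\ker A\setminus\{0\}$ you correctly extract $\beta\in\ker A$ with $P^*\beta=\pm{\rm i}\beta$, but the eigenvector of $\mathcal{H}$ this produces is $(0,\beta)$, since $\mathcal{H}(0,\beta)=(A\beta,P^*\beta)=(0,\pm{\rm i}\beta)$; it lies in $\mathbf{H}\cap T^{*c}M$ (or $\bar{\mathbf{H}}\cap T^{*c}M$), not in $T^cM$. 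Neither of your two tools moves it across: $PA=-AP^*$ applied to $\beta$ only returns $A\beta=0$, and complex conjugation swaps $\mathbf{H}$ with $\bar{\mathbf{H}}$ while preserving $T^{*c}M$. The subspace $\mathbf{H}\cap T^cM$ is governed by the \emph{other} off-diagonal block: $(X,0)\in\mathbf{H}$ means $PX={\rm i}X$ and $QX=0$, and your own eigenvector argument, run on $\ker Q$ instead of $\ker A$, shows $\mathbf{H}\cap T^cM\neq0$ iff $\ker Q=\ker\flat_\nu\neq0$, whereas $\mathbf{H}\cap T^{*c}M\neq0$ iff $\ker A=\ker\sharp_\gamma\neq0$. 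The four identities alone do not let you pass from one kernel to the other (so your ``reverse direction'' has the same problem). A concrete instance: on $\mathds{R}^2$ take $\mathcal{J}=\mathrm{diag}(J,-J^*)$ for the standard $J$ and $\mathcal{F}$ with blocks $P=Id$, $\sharp_\phi=0$, $\flat_\theta$ a symplectic form anti-invariant under $J$; the pair commutes, $\gamma=0$ is totally degenerate, yet $\mathbf{H}\cap T^cM=0$ because $\flat_\theta$ is injective, while $\mathbf{H}\cap T^{*c}M\neq0$.

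What your computation actually proves, cleanly, is that $\gamma$ is non-degenerate iff $\mathbf{H}\cap T^{*c}M=0$. This is in substance also what the paper's proof establishes: the paper characterizes non-degeneracy by the existence of the section $\tau:X'+{\rm i}X''\mapsto(X'+{\rm i}X'',\alpha'+{\rm i}\alpha'')$ of the anchor projection with image $\mathbf{H}$, solving the eigenvector equations $(a)$ for $\alpha',\alpha''$ in terms of $X',X''$ (possible iff $\sharp_\gamma$ is invertible) and verifying $(b)$; but the existence of such a $\tau$ is precisely the bijectivity of $pr_{T^cM}|_{\mathbf{H}}$, i.e.\ the condition $\mathbf{H}\cap T^{*c}M=0$. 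So your argument and the paper's are two views of the same linear algebra, and the obstruction you run into at the transfer step is real: to prove the statement with $T^cM$ literally, you would need an additional argument relating $\ker\flat_\nu$ to $\ker\sharp_\gamma$, which the block identities do not supply.
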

\begin{proof}
We write down the following matrix representation of $\mathcal{F},\mathcal{J},\mathcal{H}$:
\begin{equation}\label{matrici} \mathcal{F}=\left(\begin{array}{lc}P&\sharp_\phi\vspace*{1mm}\\ \flat_\theta&-P^*\end{array} \right),\;\mathcal{J}=\left(\begin{array}{cc}A&\sharp_\pi\vspace*{2mm}\\ \flat_\sigma&-A^*\end{array} \right),\; \mathcal{H}=\left(\begin{array}{lc}Q&\sharp_\gamma\vspace*{1mm}\\ \flat_\nu&Q^*\end{array}\right),\end{equation} where
$\phi,\theta,\pi,\sigma$ are skew-symmetric, $\gamma$ is the tensor (\ref{gammagen}), $\nu\in\odot^2T^*M$ is a second symmetric tensor and
\begin{equation}\label{proptens} \begin{array}{l}
P^2=Id-\sharp_\phi\circ\flat_\theta,\,\sharp_\phi\circ P^*=P\circ\sharp_\phi,\,\flat_\theta\circ P=P^*\circ\flat_\theta,\vspace*{2mm}\\ A^2=-Id-\sharp_\pi\circ\flat_\sigma,\,\sharp_\pi\circ A^*=A\circ\sharp_\pi,\,\flat_\sigma\circ A=A^*\circ\flat_\sigma,\vspace*{2mm}\\ Q^2=-Id-\sharp_\gamma\circ\flat_\nu,\,\sharp_\gamma\circ Q^*=-Q\circ\sharp_\gamma,\,\flat_\nu\circ Q=-Q^*\circ\flat_\nu,\vspace*{2mm}\\ Q=P\circ A+\sharp_\phi\circ\flat_\sigma=A\circ P+\sharp_\pi\circ\flat_\theta,\vspace*{2mm}\\
\sharp_\gamma=A\circ\sharp_\pi-\sharp_\phi\circ A^*=A\circ\sharp_\phi+\sharp_\pi\circ P^*,\vspace*{2mm}\\  \flat_\nu=\flat_\theta\circ A-P^*\circ\flat_\sigma=\flat_\sigma\circ P-A^*\circ\flat_\theta;\end{array}\end{equation}
these relations ensure that $\mathcal{F},\mathcal{J},\mathcal{H}$ are structures of the required type and $\mathcal{H}=\mathcal{F}\circ\mathcal{J}$.

The condition $\mathbf{H}\cap T^cM=0$ is equivalent to $\mathbf{T}^cM=T^cM\oplus \mathbf{H}$, so that the corresponding projection onto $T^cM$ has an inverse $\tau:T^cM\rightarrow \mathbf{H}$. The latter must be of the form
$$X'+{\rm i}X''\,\mapsto\,(X'+{\rm i}X'',\alpha'+{\rm i}\alpha''),$$ where the right hand side is an ${\rm i}$-eigenvector of $\mathcal{H}$, equivalently:
$$\;(a)\hspace{5mm} QX'+\sharp_\gamma\alpha'=-X'',\;QX''+\sharp_\gamma\alpha''=X',$$
$$\;(b)\hspace{5mm} \flat_\nu X'+Q^*\alpha'=-\alpha'',\; \flat_\nu X''+Q^*\alpha''=\alpha'.$$
If the isomorphism $\tau$ exists, equations $(a)$ must define $\alpha',\alpha''$ uniquely, for any $X',X''$ and this happens iff $\gamma$ is non-degenerate.

If $\gamma$ is non-degenerate and if we solve $(a)$, we get
$$\alpha'=-\flat_\gamma(QX'+X''),\;\alpha''=\flat_\gamma(X'-QX'')$$
and we can check that the solutions also satisfy $(b)$ by using properties (\ref{proptens}). This shows that the obtained $\alpha',\alpha''$ yield an isomorphism $\tau$ as required.\end{proof}

Furthermore, put $\flat_\psi=-\flat_\gamma\circ Q$. Then, $\psi$ is a $2$-form and the values of $\alpha',\alpha''$ obtained above give the expression
\begin{equation}\label{tau} \tau(X)=(X,\flat_{\psi+{\rm i}\gamma}X),\; \forall X=X'+{\rm i}X''\in T^cM.
\end{equation}
The image of $\tau$ is $\mathbf{H}$, the image of the conjugate operator $\bar{\tau}$ is the $-{\rm i}$-eigenbundle $\bar{\mathbf{H}}$ of $\mathcal{H}$ and (\ref{tau}) is, in fact, one more way to express the operator $\mathcal{H}$. Another interesting consequence of formula (\ref{tau}) is
\begin{equation}\label{ggamma} \gamma(X,Y)=-\frac{{\rm i}}{2}g(\tau X,\tau Y),\end{equation}
where $X,Y\in T^cM$ and $\gamma$ is extended by complex linearity (we should have written $\gamma^{-1}$, but, we follow the custom of Riemannian geometry).
\begin{prop}\label{defstrcuF} The generalized almost para-Hermitian structures $(\mathcal{F},\mathcal{J})$ are in a one-to-one correspondence with triples $(\gamma,\psi,F)$, where $\gamma$ is a pseudo-Riemannian metric of $M$, $\psi$ is a $2$-form and $F\in End\,T^cM$ is a complex $(1,1)$-tensor field such that
\begin{equation}\label{Fgamma} F^2=Id,\,\gamma(FX,Y)=-\gamma(X,FY).\end{equation}
The same structures $(\mathcal{F},\mathcal{J})$ are in a one-to-one correspondence with triples $(\gamma,\psi,J\in End\,T^cM)$, where
\begin{equation}\label{Jgamma} J^2=-Id,
\gamma(JX,Y)=-\gamma(X,JY).\end{equation}
The tensors $F,J$ of a given structure $(\mathcal{F},\mathcal{J})$ are related by the equality $J={\rm i}F$.
\end{prop}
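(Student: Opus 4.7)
The plan is to use the operator $\mathcal{H}=\mathcal{J}\circ\mathcal{F}$ as an intermediary: Proposition \ref{gammanondeg} and the paragraph following it already produce $\gamma$ and $\psi$ from $\mathcal{H}$ (via the block $Q$ and the definition $\flat_\psi=-\flat_\gamma\circ Q$), and the map $\tau:T^cM\to\mathbf{H}$, $\tau(X)=(X,\flat_{\psi+{\rm i}\gamma}X)$, is an isomorphism onto the ${\rm i}$-eigenbundle of $\mathcal{H}$, with $\bar\tau$ producing $\bar{\mathbf{H}}$ and $\mathbf{T}^cM=\mathbf{H}\oplus\bar{\mathbf{H}}$. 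It therefore suffices to describe the extra datum that distinguishes $(\mathcal{F},\mathcal{J})$ among pairs with the same $\mathcal{H}$.

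For the forward direction, since $\mathcal{F}$ commutes with $\mathcal{J}$ it commutes with $\mathcal{H}$, so $\mathcal{F}$ preserves $\mathbf{H}$ and $\bar{\mathbf{H}}$. I would transport $\mathcal{F}|_{\mathbf{H}}$ to $T^cM$ by $F:=\tau^{-1}\circ\mathcal{F}|_{\mathbf{H}}\circ\tau$. Then $F^2=Id$ follows from $\mathcal{F}^2=Id$, while (\ref{Fgamma}) follows from $g$-skewness of $\mathcal{F}$ together with formula (\ref{ggamma}):
\begin{equation*}
\gamma(FX,Y)=-\tfrac{{\rm i}}{2}g(\mathcal{F}\tau X,\tau Y)=\tfrac{{\rm i}}{2}g(\tau X,\mathcal{F}\tau Y)=-\gamma(X,FY).
\end{equation*}
This gives the assignment $(\mathcal{F},\mathcal{J})\mapsto(\gamma,\psi,F)$.

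For the inverse direction, given $(\gamma,\psi,F)$ satisfying (\ref{Fgamma}), I would define $\tau$ by (\ref{tau}), set $\mathbf{H}:=\tau(T^cM)$, $\bar{\mathbf{H}}:=\bar\tau(T^cM)$ (non-degeneracy of $\gamma$ gives $\mathbf{H}\cap\bar{\mathbf{H}}=0$, hence $\mathbf{T}^cM=\mathbf{H}\oplus\bar{\mathbf{H}}$), declare $\mathcal{H}$ to act by $\pm{\rm i}$ on $\mathbf{H},\bar{\mathbf{H}}$, extend $F$ to $\mathcal{F}$ by $\tau\circ F\circ\tau^{-1}$ on $\mathbf{H}$ and by complex conjugation on $\bar{\mathbf{H}}$, and finally put $\mathcal{J}:=\mathcal{H}\circ\mathcal{F}$. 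Conjugate-extension makes $\mathcal{F}$ real and automatically an involution; its $g$-skewness is the reverse of the computation above via (\ref{ggamma}); $\mathcal{J}^2=-Id$ and $g$-skewness of $\mathcal{J}$ follow from (\ref{HGsym}) and the commutation $\mathcal{F}\circ\mathcal{H}=\mathcal{H}\circ\mathcal{F}$, which holds because $\mathcal{F}$ preserves the eigenbundles of $\mathcal{H}$. One then checks that the two assignments are mutual inverses.

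For the $J$-parametrization, on $\mathbf{H}$ the identity $\mathcal{H}|_{\mathbf{H}}={\rm i}\,Id=\mathcal{J}\circ\mathcal{F}|_{\mathbf{H}}$ combined with $\mathcal{F}^2=Id$ forces $\mathcal{J}|_{\mathbf{H}}={\rm i}\,\mathcal{F}|_{\mathbf{H}}$, so under $\tau$ one obtains $J={\rm i}F$, and (\ref{Jgamma}) is then immediate from (\ref{Fgamma}). The main obstacle I expect is the reality/consistency check in the reverse construction: $F$ lives on $T^cM$ while $\mathbf{H}\subset\mathbf{T}^cM$, and one must verify that the operator defined by $F$ on $\mathbf{H}$ and by conjugation on $\bar{\mathbf{H}}$ does land in the real bundle $\mathbf{T}M$ and takes the block form (\ref{matrici}) with $g$-skew behaviour, so that the triple is genuinely recovered from the recipe.
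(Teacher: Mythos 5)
Your proposal follows essentially the same route as the paper: transporting $\mathcal{F}|_{\mathbf{H}}$ to $T^cM$ via $\tau$ (your $F=\tau^{-1}\circ\mathcal{F}|_{\mathbf{H}}\circ\tau$ is the paper's $FX=pr_{T^cM}\mathcal{F}(\tau X)$), deducing (\ref{Fgamma}) from the $g$-skewness of $\mathcal{F}$ together with (\ref{ggamma}), and reconstructing $\mathcal{F}$ and $\mathcal{J}=\mathcal{H}\circ\mathcal{F}$ in the converse direction by lifting $F$ and its conjugate to $\mathbf{H},\bar{\mathbf{H}}$. Your eigenvalue argument for $J={\rm i}F$ on $\mathbf{H}$ is a slightly cleaner shortcut than the paper's matrix computation, but the argument is otherwise the same (the paper additionally records the explicit matrix entries (\ref{FXP}), (\ref{Qetc}), which are needed later but not for the statement itself).
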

\begin{proof}
Since $\mathcal{F},\mathcal{H}$ commute, $\mathcal{F}$ preserves $\mathbf{H}$ and leads to a tensor $F\in End(T^cM)$ given by
\begin{equation}\label{FX} FX=pr_{T^cM}\mathcal{F}(\tau X).\end{equation}
Formulas (\ref{FX}), (\ref{ggamma}) imply (\ref{Fgamma}).
Similarly, $\mathcal{J}$ preserves $\mathbf{H}$ and produces the complex operator $JX=pr_{T^cM}\mathcal{J}(\tau X)$ with the properties (\ref{Jgamma}).

Generally, $F$ and $J$ are not real and, if we put
$$F=F_1+{\rm i}F_2,\,J=J_1+{\rm i}J_2$$ and use (\ref{matrici}), (\ref{tau}), we get
\begin{equation}\label{FXP} \begin{array}{l}
F_1=P-\sharp_\phi\circ\flat_\gamma\circ Q,\,
F_2=\sharp_\phi\circ\flat_\gamma,\vspace*{2mm}\\ J_1=A-\sharp_\pi\circ\flat_\gamma\circ Q,\,
J_2=\sharp_\pi\circ\flat_\gamma.\end{array}\end{equation}
Thus, $F$ is real iff $\phi=0$ and $J$ is real iff $\pi=0$.
The classical structure $(\gamma,F)$ is the case $\psi=0,\phi=0,\theta=0$ and, then, the operators on $T^cM$ are $F$ and $J={\rm i}\sharp_\omega\circ\flat_\gamma={\rm i}F$.

In real terms conditions (\ref{Fgamma}) become
\begin{equation}\label{F12gamma} \begin{array}{l}
F_1^2-F_2^2=Id, \,F_1\circ F_2+F_2\circ F_1=0,\vspace*{2mm}\\
\gamma(F_1X,Y)=-\gamma(X,F_1Y), \,\gamma(F_2X,Y)=-\gamma(X,F_2Y)\end{array}\end{equation}
and the same for the pair $(J_2,J_1)$ instead of $(F_1,F_2)$.

Conversely, the pair $(\gamma\in\odot^2T^*M,\psi\in\wedge^2T^*M)$, where $\gamma$ is non degenerate, allows us to reconstruct the big almost complex structure $\mathcal{H}$ by taking its eigenbundle $\mathbf{H}$ to be the image of $\tau$ given by (\ref{tau}). If we add the endomorphism $F$ of $T^cM$ that satisfies (\ref{Fgamma}), we are also able to reconstruct $\mathcal{F}$ on $\mathbf{H},\bar{\mathbf{H}}$ by lifting $F$ and its complex conjugation. The resulting $\mathcal{F}$ commutes with $\mathcal{H}$ (check on $\mathbf{H},\bar{\mathbf{H}}$). Finally, we will take $\mathcal{J}=\mathcal{H}\circ\mathcal{F}$ and $\mathcal{J}$ may be reconstructed from $J$ as $\mathcal{F}$ was from $F$. Therefore, we have
$$\left(\begin{array}{c}JX\vspace*{2mm}\\ \flat_{\psi+{\rm i}\gamma}JX\end{array}\right)=
\left(\begin{array}{cc}Q&\sharp_\gamma\vspace*{2mm}\\ \flat_\nu&Q^*\end{array}\right)
\left(\begin{array}{c}FX\vspace*{2mm}\\ \flat_{\psi+{\rm i}\gamma}FX\end{array}\right)
\;(X\in T^cM),$$ whence we deduce the required relation $J={\rm i}F$. This relation is equivalent to $J_1=-F_2,J_2=F_1$.

The entries of the respective matrices (\ref{matrici}) are determined by the real and imaginary part of the equalities
$$ \left(\begin{array}{lc}P&\sharp_\phi\vspace*{1mm}\\ \flat_\theta&-P^*\end{array} \right)
\left(\begin{array}{c}X\vspace*{1mm}\\ \flat_{\psi+{\rm i}\gamma}X\end{array} \right)
= \left(\begin{array}{c}F_1X+{\rm i}F_2X\vspace*{1mm}\\ \flat_{\psi+{\rm i}\gamma}(F_1X+{\rm i}F_2)X\end{array} \right),$$
$$\left(\begin{array}{lc}A&\sharp_\pi\vspace*{1mm}\\ \flat_\sigma&-A^*\end{array} \right)
\left(\begin{array}{c}X\vspace*{1mm}\\ \flat_{\psi+{\rm i}\gamma}X\end{array} \right)
= \left(\begin{array}{c}J_1X+{\rm i}J_2X\vspace*{1mm}\\ \flat_{\psi+{\rm i}\gamma}(J_1X+{\rm i}J_2)X\end{array}\right),$$
where we assume that $X\in TM$ is a real vector.
We shall also use $Q=-\sharp_\gamma\circ\flat_\psi,\flat_\nu=-\flat_\gamma\circ(Id+Q^2)$, which follows from the definition of $\psi$ and (\ref{proptens}). The results are
\begin{equation}\label{Qetc} \begin{array}{l}
P=F_1+F_2\circ Q,\, \sharp_\phi=F_2\circ\sharp_\gamma,\vspace*{2mm}\\ \flat_\theta=\flat_\psi\circ F_1-\flat_\gamma\circ F_2+(F_1^*+Q^*\circ F_2^*)\circ\flat_\psi,\vspace*{2mm}\\ A=J_1+J_2\circ Q,\, \sharp_\pi=J_2\circ\sharp_\gamma, \vspace*{2mm}\\ \flat_\sigma= \flat_\psi\circ J_1-\flat_\gamma\circ J_2+(J_1^*+Q^*\circ J_2^*)\circ\flat_\psi.\end{array}\end{equation}
\end{proof}

Manifolds $M$ endowed with a pair $(\gamma\in\odot^2T^*M,F\in End(T^cM))$, where $\gamma$ is non-degenerate and (\ref{Fgamma}) holds, belong to a class of manifolds discussed in \cite{Leg} under the name of almost-Hermitian manifolds in the enlarged sense. In \cite{Leg}, $\gamma$ too was allowed to be a complex tensor. In our case ($\gamma$ real) we call them {\it almost complexly-para-Hermitian} manifolds. They have two complementary fields of complex planes defined by the $\pm1$-eigenbundles of $F$, $S_\pm=im(Id\pm F)$, that are $\gamma$-isotropic, hence, both must have the same complex dimension $n=dim\,M/2$ and the non-degeneracy of $\gamma$ implies that $X\mapsto\flat_\gamma X$ defines isomorphisms $S_+\approx S_-^*,S_-\approx S_+^*$. It follows that, if we fix the metric $\gamma$, there exists a one-to-one correspondence between the complex tensor fields $F$ that satisfy (\ref{Fgamma}) and the decompositions $T^cM=S_+\oplus S_-$, where the terms
are maximally $\gamma$-isotropic subbundles.

For another remark, consider a $B$-{\it field transformation} ($B\in\wedge^2TM$) of the
generalized almost para-Hermitian structure $(\mathcal{F},\mathcal{J})$, i.e., the transformation $\mathcal{F}\mapsto\mathcal{F}^B=\mathcal{B}^{-1}\mathcal{F}\mathcal{B}, \mathcal{J}\mapsto\mathcal{J}^B=\mathcal{B}^{-1}\mathcal{J}\mathcal{B}$, where $\mathcal{B}=
\left(\begin{array}{cc}Id&0\vspace*{1mm}\\ \flat_B&Id\end{array}\right)$ \cite{{Galt},{IV}}. The result is again a generalized almost para-Hermitian structure. If the original structure is integrable and $B$ is closed, the transformed structure is integrable too. It follows easily that the $B$-field transformation preserves the tensors $\phi,\pi,\gamma$, while,
$$P\mapsto P'=P+\sharp_\phi\circ\flat_B,\, A\mapsto A'=A+\sharp_\pi\circ\flat_B,\, Q\mapsto Q'=Q+\sharp_\gamma\circ\flat_B.$$ Then, formula (\ref{FXP}) shows that the tensors $F,J$ are preserved, while, the definition of $\psi$ shows that $\psi\mapsto\psi'=\psi-B$.
\begin{example}\label{aprHerm} {\rm Let $(\gamma,J)$ be a classical almost (pseudo) Hermitian structure and take $\psi=0$. Since (\ref{Jgamma}) holds, we get a corresponding generalized almost para-Hermitian structure given by
\begin{equation}\label{exKahler}
\mathcal{F}=\left(\begin{array}{cc}0&-J\circ\sharp_\gamma\vspace*{2mm}\\ \flat_\gamma\circ J&0\end{array}\right),\; \mathcal{J}=\left(\begin{array}{cc}J&0\vspace*{2mm}\\ 0&-J^*\end{array}\right),\; \mathcal{H}=\left(\begin{array}{cc}0&\sharp_\gamma\vspace*{2mm}\\ -\flat_\gamma&0\end{array}\right).\end{equation}
Hence, we may see the generalized almost para-Hermitian structures as a bridge between the almost Hermitian and almost para-Hermitian structures.}\end{example}
\begin{example}\label{ex2Janti} {\rm Assume that $M$ has two (pseudo) Hermitian structures $(\gamma,J_\pm)$ with the same metric $\gamma$, such that the complex structures $J_+,J_-$ anti-commute and take any $\psi\in\wedge^2TM$. Then, the addition of a tensor field $F=\alpha J_++{\rm i}\beta J_-$, where $\alpha,\beta\in\mathds{R}, \beta^2-\alpha^2=1$, produces a generalized almost para-Hermitian structure on $M$. Similarly, if $(\gamma,K),(\gamma,L)$ are classical, almost pseudo-Hermitian and para-Hermitian, respectively, and if $K\circ L+L\circ K=0$, then, $\gamma$ together with a $2$-form $\psi$ and with $F=\alpha K+{\rm i}\beta L$ where $\alpha^2+\beta^2=1$, $\alpha,\beta\in\mathds{R}$, define a generalized, almost para-Hermitian structure with $F_1=\alpha K,F_2=\beta L$.}\end{example}
\begin{example}\label{proddir} {\rm The direct product $\tilde{M}=M\times M'$, where $M,M'$ are generalized almost para-Hermitian manifolds with the structures $(\mathcal{F},\mathcal{J}),(\mathcal{F}',\mathcal{J}')$, equivalently $(\gamma,\psi,F),(\gamma',\psi',F')$, has the generalized almost para-Hermitian structure $(\mathcal{F}\oplus\mathcal{F}',\mathcal{J}\oplus\mathcal{J}')$. This structure corresponds to the triple $(\gamma\oplus\gamma',\psi\oplus\psi',F\oplus F')$. The direct sum is just a notation to indicate that the terms act on corresponding components of the tangent bundle of $\tilde{M}$. For instance, if $\gamma$ is a pseudo-Euclidean metric of $\mathds{R}^{2n}$, of positive-negative inertia indices $(p,q)$, $p+q=2n$, $p\geq q$, we may write $\mathds{R}^{2n}=\mathds{R}^{2q}\times\mathds{R}^{2(n-q)}$, where the first factor has the standard para-Hermitian structure and the second factor has the standard Hermitian structure. The product structure is a generalized almost para-Hermitian structure on $\mathds{R}^{2n}$ ($2$-forms $\psi$ as needed may be added).}\end{example}

Consider a product $M\times M'$, where $(M,\gamma,F)$ is almost para-Hermitian and $(M',\gamma',{\rm i}F')$ is generalized almost para-Hermitian with a real tensor $F'$ (i.e., $(M',\gamma',F')$ is classical almost Hermitian). In this case the real and imaginary part of the product structure are $F_1=F\oplus\{0\},F_2=\{0\}\oplus F'$, which commute and we get $F_1\circ F_2=0$. Generally, a generalized almost para-Hermitian structure of a manifold $M$ such that $F_1\circ F_2=0$ will be called a {\it split structure}. The name is taken from \cite{AG} and it is motivated as follows. If $F_1\circ F_2=0$, then, (\ref{F12gamma}) implies $F_2\circ F_1=0$, $F_1^3=F_1,F_2^3=-F_2$. Furthermore, if $\Phi=F_1^2+F_2^2$, $\Phi^2=Id$ and we get a decomposition $TM=P\oplus Q$, where $P,Q$ are the $\pm1$-eigenbundles of $\Phi$ and $F_1^2|_P=Id,F_2^2|_Q=-Id$, $P=im\,F_1^2=im\,F_1=ker\,F_2$, $Q=im\,F_2^2=im\,F_2=ker\,F_1$.
\begin{example}\label{omogen} {\rm Let $M=G/H$, where $G$ is a connected Lie group that acts transitively and effectively on $M$ and $H$ is a closed subgroup, be a homogeneous, (pseudo) Riemannian space with the invariant metric $\gamma$. Let $x_0$ be the point defined by the unit $e$ of $G$. Using the construction indicated at the end of Example \ref{proddir}, we get a generalized almost para-Hermitian structure on $T_{x_0}M$ with a corresponding triple $(\gamma_{x_0},\psi_0,F_0)$.
Then, acting by $G$ on $F_0,\psi_0$ via the derived transformations we get invariant tensor fields $F,\psi$ of $M$ such that $F$ satisfies (\ref{Fgamma}) everywhere. The triple $(\gamma,\psi,F)$ produces an invariant, generalized almost para-Hermitian structure on $M$. It is known that the invariant tensors $\gamma,\psi,F$ may be identified with $ad\,\mathfrak{h}$-invariant tensors $\tilde{\gamma},\tilde{\psi},\tilde{F}$ ($\tilde{F}$ is complex) on the vector space $\mathfrak{g}/\mathfrak{h}$, where $\mathfrak{g},\mathfrak{h}$ are the Lie algebras of $G,H$, respectively. Accordingly, the invariant, generalized almost para-Hermitian structures of $M=G/H$ may be identified with generalized almost para-Hermitian structures of the vector space $\mathfrak{g}/\mathfrak{h}$ (the structures live on $(\mathfrak{g}/\mathfrak{h})\oplus(\mathfrak{g}/\mathfrak{h})^*$).}\end{example}

We end this section by proving the following result.
\begin{prop}\label{propdedesc} A generalized almost para-Hermitian structure on $M$ is equivalent with a decomposition
\begin{equation}\label{descptparaK} \mathbf{T}^cM=\mathbf{H}_+\oplus\mathbf{H}_-\oplus \bar{\mathbf{H}}_+\oplus\bar{\mathbf{H}}_-, \end{equation}
where $\mathbf{H}_+\oplus\overline{\mathbf{H}}_+, \mathbf{H}_-\oplus\overline{\mathbf{H}}_-,\mathbf{H}_+\oplus\overline{\mathbf{H}}_-$ are maximal $g$-isotropic subbundles.\end{prop}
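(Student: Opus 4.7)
The plan is to identify the required decomposition with the joint spectral decomposition of $(\mathcal{F},\mathcal{J})$, then read off the isotropy conditions from the $g$-skew-symmetry of the two structures.

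Forward direction. Since $\mathcal{F}$ and $\mathcal{J}$ commute with $\mathcal{F}^2=Id$ and $\mathcal{J}^2=-Id$, they simultaneously diagonalise on $\mathbf{T}^cM$. I label the four joint eigenbundles by the matching-signs convention
$$
\mathbf{H}_\pm=\{\mathcal{X}\in\mathbf{T}^cM:\mathcal{F}\mathcal{X}=\pm\mathcal{X},\ \mathcal{J}\mathcal{X}=\pm{\rm i}\mathcal{X}\},
$$
with $\bar{\mathbf{H}}_\pm$ their complex conjugates. Reality of $\mathcal{F}$ and $\mathcal{J}$ makes $(\mathbf{H}_\pm,\bar{\mathbf{H}}_\pm)$ a conjugate pair, and a dimension count (each $\mathbf{F}_\pm$ has rank $2n$, split by $\mathcal{J}$ into two conjugate pieces) shows every summand has rank $n$. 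By construction, $\mathbf{H}_+\oplus\bar{\mathbf{H}}_+$ is the $+1$-eigenbundle of $\mathcal{F}$, $\mathbf{H}_-\oplus\bar{\mathbf{H}}_-$ its $-1$-eigenbundle, and $\mathbf{H}_+\oplus\bar{\mathbf{H}}_-$ the ${\rm i}$-eigenbundle of $\mathcal{J}$; each of these is maximally $g$-isotropic because eigenbundles of $g$-skew operators with non-zero eigenvalue are automatically $g$-isotropic, and their ranks equal half of $\dim\mathbf{T}^cM$.

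Converse. Given a decomposition satisfying the three listed isotropies, define $\mathcal{F}$ to act as $\pm Id$ on $\mathbf{H}_\pm\oplus\bar{\mathbf{H}}_\pm$ and $\mathcal{J}$ to act as $\pm{\rm i}$ on $\mathbf{H}_+\oplus\bar{\mathbf{H}}_-$ and on its complex conjugate $\mathbf{H}_-\oplus\bar{\mathbf{H}}_+$ (the latter being maximally isotropic by conjugation). The defining pairs of summands are conjugation-invariant as sets, so both operators are real; the identities $\mathcal{F}^2=Id$ and $\mathcal{J}^2=-Id$ are built in; $g$-skewness follows from the maximal isotropy of their $\pm$-eigenbundles; and commutativity holds since both act by scalars on each joint summand. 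It remains to verify non-degeneracy of the bivector $\gamma$ of (\ref{gammagen}); by Proposition \ref{gammanondeg} this amounts to showing $(\mathbf{H}_+\oplus\mathbf{H}_-)\cap T^cM=0$, since $\mathbf{H}_+\oplus\mathbf{H}_-$ is the ${\rm i}$-eigenbundle of $\mathcal{H}=\mathcal{F}\mathcal{J}$ in the present labelling.

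I expect this last step to be the main obstacle. Unlike the algebraic identifications of the eigenbundles, the transversality $(\mathbf{H}_+\oplus\mathbf{H}_-)\cap T^cM=0$ is not immediate from the three listed isotropies, because $\mathbf{H}_+\oplus\mathbf{H}_-$ itself is not isotropic (being an eigenbundle of the $g$-symmetric $\mathcal{H}$, it cannot be, as noted before Definition \ref{genparaK}). My plan is to split any putative element of $(\mathbf{H}_+\oplus\mathbf{H}_-)\cap T^cM$ along $\mathbf{T}^cM=\mathbf{J}\oplus\bar{\mathbf{J}}$ with $\mathbf{J}=\mathbf{H}_+\oplus\bar{\mathbf{H}}_-$ and $\bar{\mathbf{J}}=\mathbf{H}_-\oplus\bar{\mathbf{H}}_+$, and then to derive a contradiction by pairing successively against $\mathbf{F}_+=\mathbf{H}_+\oplus\bar{\mathbf{H}}_+$ and $\mathbf{F}_-=\mathbf{H}_-\oplus\bar{\mathbf{H}}_-$ using the non-degeneracy of $g$ on $\mathbf{T}^cM$; the other verifications are routine bookkeeping with the joint eigenvalues.
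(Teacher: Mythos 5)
Your forward direction and the algebraic half of the converse are correct and follow essentially the same route as the paper: the decomposition is the joint spectral decomposition of the commuting pair, the three listed subbundles are exactly $\mathbf{F}_\pm$ and $\mathbf{J}$, and $g$-skewness of an involution (resp.\ anti-involution) is equivalent to isotropy of its eigenbundles. The problem is the step you yourself flag as ``the main obstacle'': you leave the non-degeneracy of $\gamma$ --- equivalently, by Proposition \ref{gammanondeg}, the transversality $(\mathbf{H}_+\oplus\mathbf{H}_-)\cap T^cM=0$ --- as an unexecuted plan, and that plan cannot succeed, because the transversality does not follow from the three listed isotropy conditions. Concretely, on any almost complex manifold $(M,J)$ take $\mathcal{F}=\mathrm{diag}(Id,-Id)$ (a legitimate generalized almost paracomplex structure, with maximal isotropic eigenbundles $T^cM$ and $T^{*c}M$) and $\mathcal{J}=\mathrm{diag}(J,-J^*)$. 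These are real, $g$-skew, commute, and satisfy $\mathcal{F}^2=Id$, $\mathcal{J}^2=-Id$; their joint eigenbundles give a decomposition (\ref{descptparaK}) in which $\mathbf{F}_\pm$ and $\mathbf{J}$ are all maximal $g$-isotropic, yet $\mathbf{H}_+=T^{1,0}M\subset T^cM$, $\mathbf{H}_-=\Lambda^{1,0}M\subset T^{*c}M$, and $\gamma\equiv 0$ by (\ref{gammagen}). Your proposed contradiction --- pairing an element of $(\mathbf{H}_+\oplus\mathbf{H}_-)\cap T^cM$ against $\mathbf{F}_+$ and $\mathbf{F}_-$ --- evaporates here: such an element pairs to zero with $\mathbf{F}_+=T^cM$ and non-trivially with $\mathbf{F}_-=T^{*c}M$, and no contradiction arises.

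To be fair, this gap is not only yours: the paper's own converse ends by declaring the pair to be a generalized almost para-Hermitian structure as soon as maximal isotropy of $\mathbf{F}_\pm,\mathbf{J}$ is noted, without verifying the non-degeneracy required by Definition \ref{genparaK}. The honest repair is to add to the data of the decomposition the requirement that $(\mathbf{H}_+\oplus\mathbf{H}_-)\cap T^cM=0$ (equivalently, that $\mathbf{H}_+\oplus\mathbf{H}_-$ projects isomorphically onto $T^cM$); with that hypothesis your converse closes immediately via Proposition \ref{gammanondeg}. As written, your proof is incomplete at exactly the one point that is not routine bookkeeping, and no amount of pairing against the isotropic pieces will manufacture the missing non-degeneracy.
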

\begin{proof} Start with a structure $(\mathcal{F},\mathcal{J})$ and denote by $\mathbf{F}_\pm$ the $\pm1$-eigenbundles of $\mathcal{F}$ and by $\mathbf{J},\bar{\mathbf{J}}$ the $\pm {\rm i}$-eigenbundles of $\mathcal{J}$. The commutation properties (\ref{propFGH}) ensure that the projections of a vector of $\mathbf{F}_\pm,\mathbf{J}$ on $\mathbf{H},\bar{\mathbf{H}}$ belong to $\mathbf{F}_\pm,\mathbf{J}$, respectively, and the projections of a vector of $\mathbf{H}$ on $\mathbf{F}_\pm,\mathbf{J}$ belong to $\mathbf{H}$.
This leads to the existence of the following decompositions
\begin{equation}\label{descH} \begin{array}{l}
\mathbf{F}_\pm=(\mathbf{F}_\pm\cap \mathbf{H})\oplus(\mathbf{F}_\pm\cap\bar{\mathbf{H}}),\,\mathbf{J}=(\mathbf{J}\cap \mathbf{H})\oplus(\mathbf{J}\cap\bar{\mathbf{H}}),\vspace*{2mm}\\ \mathbf{H}=(\mathbf{F}_+\cap \mathbf{H})\oplus(\mathbf{F}_-\cap \mathbf{H}),\, \mathbf{H}=(\mathbf{H}\cap\mathbf{J})\oplus (\mathbf{H}\cap\bar{\mathbf{J}}).\end{array}\end{equation}
Moreover, by looking at the properties of a vector in the corresponding intersection, we get
\begin{equation}\label{=cap}
\mathbf{F}_+\cap\mathbf{H}=\mathbf{F}_+\cap\mathbf{J}= \mathbf{H}\cap\mathbf{J},\,
\mathbf{F}_-\cap\mathbf{H}=\mathbf{F}_-\cap\mathbf{\bar{J}} =\mathbf{H}\cap\mathbf{\bar{J}}.\end{equation}
Then, if we define $\mathbf{H}_\pm=\mathbf{F}_\pm\cap\mathbf{H}$, equalities (\ref{descH}) give us (\ref{descptparaK}) and the subbundles of the conclusion are exactly $\mathbf{F}_\pm,\mathbf{J}$, hence, the conclusion holds.

Conversely, a decomposition (\ref{descptparaK}) produces endomorphisms $\mathcal{F},\mathcal{J}$ such that
$$\begin{array}{l} \mathcal{F}_{\mathbf{H}_+}=\mathcal{F}_{\overline{\mathbf{H}}_+}=Id,
\mathcal{F}_{\mathbf{H}_-}=\mathcal{F}_{\overline{\mathbf{H}}_-}=-Id,\vspace*{2mm}\\ \mathcal{J}_{\mathbf{H}_+}=\mathcal{J}_{\overline{\mathbf{H}}_-}={\rm i}Id,
\mathcal{J}_{\mathbf{H}_-}=\mathcal{J}_{\overline{\mathbf{H}}_+}=-{\rm i}Id.
\end{array}$$
These endomorphisms $\mathcal{F},\mathcal{J}$ are real, commute and satisfy the conditions $\mathcal{F}^2=Id,\mathcal{J}^2=-Id$. Since the subbundles $\mathbf{F}_+
=\mathbf{H}_+\oplus\overline{\mathbf{H}}_+, \mathbf{F}_-
=\mathbf{H}_-\oplus\overline{\mathbf{H}}_-, \mathbf{J}
=\mathbf{H}_+\oplus\overline{\mathbf{H}}_-$ were required to be maximal $g$-isotropic (which also implies $dim\,\mathbf{H}_+=dim\,\mathbf{H}_-$), the pair $(\mathcal{F},\mathcal{J})$ is a generalized almost para-Hermitian structure of $M$.
\end{proof}
\section{Generalized para-K\"ahler manifolds}
In this section we investigate the integrability conditions of a generalized almost para-Hermitian structure $(\mathcal{F},\mathcal{J})$. These conditions can be obtained in the same way as for generalized K\"ahler manifolds \cite{{Galt},{VgenS}}.
\begin{theorem}\label{thintegr} A generalized almost para-Hermitian structure $(\mathcal{F},\mathcal{J})$ is integrable iff the corresponding subbundles $\mathbf{H}_\pm$ are closed under Courant brackets. Furthermore, if $(\gamma,\psi,F)$ is the equivalent triple of tensors of the structure, $(\mathcal{F},\mathcal{J})$ is a generalized para-K\"ahler structure iff the (complex) eigenbundles of $F$ are involutive and
\begin{equation}\label{paraKprinF} (\nabla^\gamma_XF)(Y)=\frac{{\rm i}}{2}\sharp_\gamma[i(X)i(FY)d\psi+(i(X)i(Y)d\psi)\circ F],
\end{equation} where $\nabla^\gamma$ is the Levi-Civita connection of the metric $\gamma$.\end{theorem}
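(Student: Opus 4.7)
My plan is to reduce both assertions to a direct Courant-bracket computation on $\mathbf{H}$ via the isomorphism $\tau$ of (\ref{tau}). For the first assertion, Proposition \ref{propdedesc} yields the presentations $\mathbf{F}_\pm^c=\mathbf{H}_\pm\oplus\overline{\mathbf{H}}_\pm$, $\mathbf{J}=\mathbf{H}_+\oplus\overline{\mathbf{H}}_-$, $\overline{\mathbf{J}}=\mathbf{H}_-\oplus\overline{\mathbf{H}}_+$, together with the identities $\mathbf{H}_+=\mathbf{F}_+^c\cap\mathbf{J}$ and the three analogous ones. Since intersections of Courant-closed subbundles are again Courant-closed, integrability of $(\mathcal{F},\mathcal{J})$ immediately forces closedness of $\mathbf{H}_\pm$. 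For the converse, one verifies closedness of $\mathbf{F}_\pm^c$ and of $\mathbf{J}$ by checking that the cross-brackets $[\mathbf{H}_\pm,\overline{\mathbf{H}}_\pm]$ and $[\mathbf{H}_+,\overline{\mathbf{H}}_-]$ remain in the ambient maximal $g$-isotropic subbundle; this follows from the same Leibniz/$g$-invariance argument used for generalized K\"ahler structures in \cite{Galt,VgenS}, exploiting the isotropy of the ambient subbundle and the conjugation symmetry of the bracket.

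For the second assertion I write $\mathbf{H}_\pm=\tau(S_\pm^c)$, where $S_\pm^c=\ker(F\mp\mathrm{Id})\subset T^cM$. Closedness of $\mathbf{H}_+$ under the Courant bracket amounts to $[\tau(X),\tau(Y)]\in\tau(S_+^c)$ for all $X,Y\in S_+^c$, and similarly for $\mathbf{H}_-$. A direct computation using the explicit form of the bracket yields
$$[\tau(X),\tau(Y)]=\bigl([X,Y],\ \flat_{\psi+\mathrm{i}\gamma}[X,Y]+\Delta(X,Y)\bigr),$$
where $\Delta(X,Y)$ is a $1$-form. Matching first components forces $[X,Y]\in S_+^c$, i.e.\ involutivity of $S_+^c$; matching second components forces $\Delta(X,Y)=0$ on $S_+^c\times S_+^c$.

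To compute $\Delta$, the real part (the $\psi$-contribution) is handled by Cartan calculus, producing a term $d\psi(X,Y,\cdot)$; the imaginary part (the $\gamma$-contribution) is handled using torsion-freeness and metric compatibility of the Levi-Civita connection $\nabla^\gamma$. The outcome is
$$\Delta(X,Y)(W)=d\psi(X,Y,W)+\mathrm{i}\bigl[\gamma(Y,\nabla^\gamma_W X)-\gamma(X,\nabla^\gamma_W Y)\bigr],$$
so that the integrability conditions on $\mathbf{H}_\pm$ become the vanishing of this expression on $S_+^c\times S_+^c$ and on $S_-^c\times S_-^c$, respectively.

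To turn these restricted vanishings into the global formula (\ref{paraKprinF}), I exploit that $S_+^c$ and $S_-^c$ are complementary $\gamma$-isotropic subbundles, so $\gamma$ induces a non-degenerate pairing between them. For $Y\in S_+^c$ one has $(\nabla^\gamma_W F)(Y)=-2\,\mathrm{pr}_-\nabla^\gamma_W Y$, and hence $\gamma(X,\nabla^\gamma_W Y)=\tfrac{1}{2}\gamma(X,(\nabla^\gamma_W F)Y)$ whenever $X\in S_+^c$; the analogous formula holds on $S_-^c$. Substituting into $\Delta=0$ and polarizing in $X,Y\in T^cM$ by splitting each into its $S_+^c$- and $S_-^c$-components yields (\ref{paraKprinF}). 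The main technical obstacle I anticipate is precisely this last bookkeeping step: producing the two terms $\iota_X\iota_{FY}d\psi$ and $(\iota_X\iota_Y d\psi)\circ F$ with the correct $\mathrm{i}/2$ coefficient requires careful tracking of signs and of the complex nature $F=F_1+\mathrm{i}F_2$ of the endomorphism, in order to combine the two eigenbundle conditions symmetrically into a single formula valid on all of $T^cM$.
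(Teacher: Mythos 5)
Your proposal follows essentially the same route as the paper: the first equivalence via the identities $\mathbf{H}_\pm=\mathbf{F}_\pm\cap\mathbf{J}$ (resp.\ $\cap\bar{\mathbf{J}}$) in one direction and the $g$-invariance of the Courant bracket plus maximal isotropy in the other, and the second via the bracket formula $[\tau X,\tau Y]=\tau[X,Y]+(0,\Delta(X,Y))$, the resulting condition $\gamma(X,\nabla^\gamma_ZY)-\gamma(Y,\nabla^\gamma_ZX)={\rm i}\,d\psi(X,Y,Z)$ on $S_\pm$, and the extension to all of $T^cM$ by splitting into eigenbundle components (the paper packages this last step with an auxiliary connection $\Theta=\nabla-\nabla^\gamma$, which is the same device as your projections). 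The only blemish is the sign in $(\nabla^\gamma_WF)(Y)=-2\,\mathrm{pr}_-\nabla^\gamma_WY$ for $Y\in S_+$ (it should be $+2\,\mathrm{pr}_-\nabla^\gamma_WY$, consistent with your subsequent identity $\gamma(X,\nabla^\gamma_WY)=\tfrac{1}{2}\gamma(X,(\nabla^\gamma_WF)Y)$), which does not affect the argument.
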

\begin{proof}
We will use decomposition (\ref{descptparaK}). Equalities (\ref{=cap}) show that, if $\mathcal{F},\mathcal{J}$ are both integrable, the subbundles $\mathbf{H}_\pm$ are closed under the Courant bracket. Conversely, assume that $\mathbf{H}_\pm$ are closed under the Courant bracket. Consider the following general property \cite{C}
$$ \begin{array}{c}(pr_{TM}\mathcal{Z})(g(
\mathcal{X},\mathcal{Y}))=g([ \mathcal{Z},\mathcal{X}],\mathcal{Y})
+g(\mathcal{X},[ \mathcal{Z},\mathcal{Y}])\vspace{2mm}\\
+\frac{1}{2}(pr_{TM}
\mathcal{X})(g( \mathcal{Z},\mathcal{Y})) +\frac{1}{2}(pr_{TM}
\mathcal{Y})(g( \mathcal{Z},\mathcal{X})).
\end{array}$$ For
$(\mathcal{X},\mathcal{Y},\mathcal{Z})\mapsto
(\mathcal{Z},\bar{\mathcal{Y}},\mathcal{X})$, where $\mathcal{X},\mathcal{Y},\mathcal{Z}\in\mathbf{H}_\pm$,
this property and the $g$-isotropy of $\mathbf{F}_\pm=\mathbf{H}_\pm\oplus\bar{\mathbf{H}}_\pm$ yield
$[\mathcal{X},\bar{\mathcal{Y}}]\perp_g\mathbf{H}_\pm$. By conjugation, and changing the role of $\mathcal{X},\mathcal{Y}$, we also get $[\mathcal{X},\bar{\mathcal{Y}}]\perp_g\bar{\mathbf{H}}_\pm$, therefore $[\mathcal{X},\bar{\mathcal{Y}}]\perp_g\mathbf{F}_\pm$ and, since $\mathbf{F}_\pm$ is maximally isotropic, $[\mathcal{X},\bar{\mathcal{Y}}]\in\mathbf{F}_\pm$. This proves the closure of $\mathbf{F}_\pm$ under Courant brackets, i.e., the integrability of $\mathcal{F}$. The closure of $\mathbf{J}$ under Courant brackets follows by the same computations for arguments $\mathcal{X},\mathcal{Z}\in\mathbf{H}_+,
\mathcal{Y}\in\mathbf{H}_-$ and $\mathcal{X}\in\mathbf{H}_+,
\mathcal{Z},\mathcal{Y}\in\mathbf{H}_-$ using the maximal isotropy of $\mathbf{J}$ and the already proven result for $\mathbf{F}_\pm$. Therefore $\mathcal{J}$ is integrable too.

For the second part of the theorem, straightforward computations (as in\cite{{Galt},{VgenS}}) yield the Courant bracket
\begin{equation}\label{auxintegr} [\tau X,\tau Y]=\tau[X,Y] +(0,i(Y)i(X)d\psi+{\rm i}(L_Xi(Y)-i(X)L_Y)(\gamma)),\end{equation}
where $X,Y\in T^cM$.

Therefore, $pr_{T^cM}[\tau X,\tau Y]=[X,Y]$ and, since $\mathbf{H}_\pm=\tau(S_\pm)$, where $S_\pm$ are the (complex) $\pm1$-eigenbundles of $F$, involutivity of $S_\pm\subseteq T^cM$ is a necessary condition for the integrability of the structure $(\mathcal{F},\mathcal{J})$. Moreover, (\ref{auxintegr}) shows that the necessary and sufficient conditions are involutivity of $S_\pm$ plus the two equalities
\begin{equation}\label{condsuf2}(L_Xi(Y)-i(X)L_Y)(\gamma))={\rm i}i(X)i( Y)d\psi, \;X,Y\in S_\pm.\end{equation}

We shall express these conditions in terms of the Levi-Civita connection $\nabla^\gamma$ of $\gamma$. Involutivity of $S_\pm$ is equivalent to the vanishing of the (complex) Nijenhuis tensor
$$N_F(X,Y)=[FX,FY]-F[FX,Y]-F[X,FY]+F^2[X,Y]=0,\;\;X,Y\in T^cM,$$
where the brackets are Lie brackets. Thus, we can express them by the (torsionless) Levi-Civita connection $\nabla^\gamma$ of the metric $\gamma$. The result is
\begin{equation}\label{NijFgamma} (\nabla^\gamma_{FX}F)(Y)-(\nabla^\gamma_{FY}F)(X) =F[(\nabla^\gamma_{X}F)(Y)-(\nabla^\gamma_{Y}F)(X)].
\end{equation}

Then, if we evaluate (\ref{condsuf2}) on $Z\in T^cM$ and express the Lie derivatives using $\nabla^\gamma$, (\ref{condsuf2}) becomes
\begin{equation}\label{condsuf21} \gamma(X,\nabla^\gamma_ZY)-\gamma(Y,\nabla^\gamma_ZX)
={\rm i}d\psi(X,Y,Z), \;X,Y\in S_\pm.\end{equation}

Furthermore, let $\nabla$ be a connection on $T^cM$ such that $\nabla\gamma=0,\nabla F=0$. Such connections exist. For instance, we may take $$\nabla_XY=pr_{S_+}\nabla^\gamma_X(pr_{S_+}Y)+pr_{S_-}\nabla^\gamma_X(pr_{S_-}Y).$$ $\nabla$ preserves $S_\pm$, which is equivalent to $\nabla F=0$, and $\nabla\gamma=0$ follows by taking into account $\gamma|_{S_\pm}=0,\nabla^\gamma\gamma=0$. If $\Theta=\nabla-\nabla^\gamma$, $\Theta$ satisfies the conditions
\begin{equation}\label{auxTheta}\begin{array}{l}\gamma(\Theta(X,Y),Z) +\gamma(Y,\Theta(X,Z))=0,\vspace*{2mm}\\ (\nabla^\gamma_XF)(Y)=F\Theta(X,Y)-\Theta(X,FY).\end{array}\end{equation}
Using (\ref{auxTheta}) and the $\gamma$-isotropy of $S_\pm$,  condition (\ref{condsuf21}) becomes
\begin{equation}\label{condsuf22} \gamma(Y,\Theta(Z,X))
=\frac{{\rm i}}{2}d\psi(X,Y,Z), \;X,Y\in S_\pm.\end{equation}

In (\ref{condsuf22}) we may replace $X,Y$ by $X\pm FX,Y\pm FY$, where the new arguments $X,Y\in T^cM$ are arbitrary vector fields. Then, using (\ref{auxTheta}) again, we get
$$\begin{array}{l} \gamma(X,(\nabla^\gamma_ZF)(FY))+ \gamma(X,(\nabla^\gamma_ZF)(Y))\vspace*{2mm}\\ = \frac{{\rm i}}{2}[d\psi(X,Y,Z)+d\psi(FX,FY,Z)+d\psi(X,FY,Z)+d\psi(FX,Y,Z)],\vspace*{2mm}\\	 \gamma(X,(\nabla^\gamma_ZF)(FY))- \gamma(X,(\nabla^\gamma_ZF)(Y))\vspace*{2mm}\\ = \frac{{\rm i}}{2}[d\psi(X,Y,Z)+d\psi(FX,FY,Z)-d\psi(X,FY,Z)-d\psi(FX,Y,Z)].\end{array}$$
These conditions may be replaced by their sum and difference, which turn out to be the same condition applied to $Y$ and $FY$, respectively. The single remaining condition is
\begin{equation}\label{paraKF}
\gamma(X,(\nabla^\gamma_ZF)(Y))= \frac{{\rm i}}{2}[d\psi(X,FY,Z)+d\psi(FX,Y,Z)],
\end{equation}
which is equivalent to (\ref{paraKprinF}).
\end{proof}

Notice that (\ref{Fgamma}) implies
\begin{equation}\label{nablaomegaF}
\gamma(X,(\nabla^\gamma_ZF)(Y))=(\nabla^\gamma_Z\omega)(X,Y),\end{equation}
where $\omega(X,Y)=\gamma(X,FY)$. This yields one more expression of the integrability condition (\ref{paraKF}).
\begin{corol}\label{integrcupsizero} A generalized almost para-Hermitian structure associated to a triple $(\gamma,\psi,F)$ where $d\psi=0$ is generalized para-K\"ahler iff $\nabla^\gamma F=0$.\end{corol}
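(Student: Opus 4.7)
The plan is to read this directly off Theorem \ref{thintegr}, with the only real content being that when $d\psi=0$ the involutivity of the eigenbundles $S_\pm$ of $F$ becomes automatic rather than an extra hypothesis. So the corollary amounts to checking that the characterization in Theorem \ref{thintegr} collapses in the closed case.

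For the forward direction, I would simply specialize the integrability equation (\ref{paraKprinF}) to $d\psi=0$: the right hand side is built entirely from $d\psi$, so it vanishes identically and we read off $\nabla^\gamma F=0$.

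For the converse, assume $\nabla^\gamma F=0$ and $d\psi=0$. Condition (\ref{paraKprinF}) holds trivially, so by Theorem \ref{thintegr} it remains only to verify that the eigenbundles $S_\pm=\mathrm{im}(Id\pm F)$ are involutive. I would do this by showing that the complex Nijenhuis tensor $N_F$ vanishes. Since $\nabla^\gamma$ is torsion free we may write $[X,Y]=\nabla^\gamma_XY-\nabla^\gamma_YX$ and expand
\begin{equation*}
N_F(X,Y)=(\nabla^\gamma_{FX}F)(Y)-(\nabla^\gamma_{FY}F)(X)-F(\nabla^\gamma_XF)(Y)+F(\nabla^\gamma_YF)(X),
\end{equation*}
which is just the relation (\ref{NijFgamma}) written out; with $\nabla^\gamma F=0$ each term vanishes, so $N_F=0$. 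Since $F^2=Id$, vanishing of $N_F$ is equivalent to involutivity of both eigenbundles $S_\pm$, finishing the proof.

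There is no substantive obstacle here: the only mildly non-obvious step is noting that one does not have to assume involutivity of $S_\pm$ separately, because parallelism of $F$ under a torsion-free connection forces it. Everything else is a direct specialization of Theorem \ref{thintegr}.
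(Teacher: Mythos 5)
Your proposal is correct and follows the paper's own argument essentially verbatim: the forward direction reads $\nabla^\gamma F=0$ off equation (\ref{paraKprinF}) when $d\psi=0$, and the converse observes that $\nabla^\gamma F=0$ makes both sides of (\ref{NijFgamma}) vanish, which is exactly the vanishing of $N_F$ and hence the involutivity of $S_\pm$. Your explicit expansion of $N_F$ via the torsion-free connection is just the derivation of (\ref{NijFgamma}) that the paper already carried out inside the proof of Theorem \ref{thintegr}, so nothing new is needed.
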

\begin{proof} For $d\psi=0$, (\ref{paraKprinF}) becomes $\nabla^\gamma F=0$. Furthermore, $\nabla^\gamma F=0$ and (\ref{NijFgamma}) imply the involutivity of $S_\pm$.
\end{proof}
\begin{rem}\label{obsconexpsi} {\rm In the general case, formula $\nabla_XY=\nabla^\gamma_XY-\frac{{\rm i}}{2}\sharp_\gamma[i(X)i(Y)d\psi]$ defines a new metric connection and (\ref{paraKprinF}) is equivalent to $\nabla F=0$.}\end{rem}

Another form of the integrability conditions is given by the following theorem.
\begin{theorem}\label{th2integr} The generalized almost para-Hermitian structure associated to a triple $(\gamma,\psi,F)$ is integrable iff the eigenspaces $S_\pm$ of $F$ are involutive and $\psi$ satisfies the condition
\begin{equation}\label{necint} d\psi(X,Y,Z)={\rm i}d\omega(FX,FY,FZ).\end{equation}
\end{theorem}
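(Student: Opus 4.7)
The plan is to invoke Theorem~\ref{thintegr}: combined with identity~\eqref{nablaomegaF}, it says that integrability of $(\mathcal{F},\mathcal{J})$ is equivalent to involutivity of $S_\pm$ together with the condition
$$(\nabla^\gamma_Z\omega)(X,Y) = \tfrac{\mathrm{i}}{2}[d\psi(X, FY, Z) + d\psi(FX, Y, Z)], \qquad (\ast)$$
where $\omega(X,Y)=\gamma(X,FY)$. Hence it suffices to prove that, under involutivity of $S_\pm$, $(\ast)$ is equivalent to~\eqref{necint}.

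For the forward direction, I would cyclically sum $(\ast)$ over $(X,Y,Z)$: the left side yields $d\omega(X,Y,Z)$ via the torsion-free identity $d\omega=\sum_{\mathrm{cyc}}\nabla^\gamma\omega$, while the right side, after using the cyclic symmetry of the $3$-form $d\psi$ to merge like terms, collapses to
$$d\omega(X,Y,Z)=\mathrm{i}[d\psi(FX,Y,Z)+d\psi(X,FY,Z)+d\psi(X,Y,FZ)]. \qquad (\dagger)$$
Substituting $(X,Y,Z)\mapsto(FX,FY,FZ)$ in $(\dagger)$ and using $F^2=\mathrm{Id}$, a direct rearrangement gives
$$d\psi(X,Y,Z)-\mathrm{i}\,d\omega(FX,FY,FZ) = d\psi(X,Y,Z)+d\psi(X,FY,FZ)+d\psi(FX,Y,FZ)+d\psi(FX,FY,Z).$$
I would then introduce the bigrading on forms from $T^cM=S_+\oplus S_-$: a direct check on each of the four bidegrees shows the ``even-parity $F$-sum'' on the right equals $4(d\psi^{(3,0)}+d\psi^{(0,3)})$. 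Since $\gamma|_{S_\pm}=0$ gives $\omega\in\Lambda^{1,1}$, and involutivity of $S_\pm$ forces $d\colon\Lambda^{p,q}\to\Lambda^{p+1,q}\oplus\Lambda^{p,q+1}$, I have $d\omega\in\Lambda^{2,1}\oplus\Lambda^{1,2}$. Comparing the $(3,0)$ and $(0,3)$ components of $(\dagger)$ (noting the ``one-$F$ trace'' on the right acts on $\Lambda^{p,q}$ by $p-q$) then yields $d\psi^{(3,0)}=d\psi^{(0,3)}=0$, so the displayed identity reduces to~\eqref{necint}.

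For the converse, I use that $(\nabla^\gamma_Z F)$ exchanges $S_+$ and $S_-$ (from $(\nabla F)F=-F(\nabla F)$), so $(\nabla^\gamma_Z\omega)(X,Y)=\gamma(X,(\nabla^\gamma_Z F)Y)$ vanishes whenever $X,Y$ are of opposite type, matching the right side of $(\ast)$ in that case. It remains to check $(\ast)$ when $X,Y\in S_+$ (the case $S_-$ is symmetric). If $Z\in S_-$, two terms of $d\omega=\sum_{\mathrm{cyc}}\nabla^\gamma\omega$ involve mixed pairs and vanish, leaving $(\nabla^\gamma_Z\omega)(X,Y)=d\omega(X,Y,Z)$; applying~\eqref{necint} with $F|_{S_\pm}=\pm\mathrm{Id}$ converts this into $\mathrm{i}\,d\psi(X,Y,Z)$, which equals the right side of $(\ast)$ since $FX=X,FY=Y$. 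The subtle case is $X,Y,Z\in S_+$: from~\eqref{necint} and $d\omega\in\Lambda^{2,1}\oplus\Lambda^{1,2}$ one gets $d\psi^{(3,0)}=0$, so the right side of $(\ast)$ vanishes; on the left, involutivity of $S_+$ is equivalent to $(\nabla^\gamma_Y F)Z=(\nabla^\gamma_Z F)Y$ for $Y,Z\in S_+$, which makes $T(X,Y,Z):=(\nabla^\gamma_Z\omega)(X,Y)$ symmetric in $(Y,Z)$ and antisymmetric in $(X,Y)$. Its cyclic sum is $d\omega(X,Y,Z)=0$ by bidegree, yet the same hook symmetry reduces the cyclic sum to $T(X,Y,Z)$ itself, forcing $T=0$. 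I expect this hook-symmetry step, extracting pointwise vanishing from a cyclic-sum identity, to be the main obstacle.
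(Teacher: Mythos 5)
Your argument is correct, and it reaches the same two pivot points as the paper: your $(\dagger)$ is precisely the paper's identity (\ref{ciclic2}), obtained in the same way (cyclic summation of (\ref{paraKprinF}) via $d\omega=\sum_{\mathrm{cyc}}\nabla^\gamma\omega$), and your bidegree bookkeeping (the even-parity $F$-sum equals $4(d\psi^{(3,0)}+d\psi^{(0,3)})$, the one-$F$ trace acts by $p-q$) is the coordinate-free version of the paper's ``check on arguments that are eigenvectors of $F$'' identities (\ref{omega3F}) and (\ref{3dpsi}). Where you genuinely diverge is in the sufficiency direction and in how involutivity is exploited. The paper first converts involutivity of $S_\pm$ into the identity (\ref{auxomega}), $2(\nabla^\gamma_Z\omega)(X,Y)=d\omega(X,Y,Z)+d\omega(FX,FY,Z)$, rewrites (\ref{paraKprinF}) as the intermediate condition (\ref{intaux}), and then recovers (\ref{intaux}) algebraically from (\ref{necint}) using (\ref{3dpsi}). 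You instead verify $(\ast)$ pointwise on each combination of $F$-eigenvectors, using that $\nabla^\gamma F$ anticommutes with $F$ (which kills the mixed $(X,Y)$ case), that the cyclic expansion of $d\omega$ collapses to the single term $(\nabla^\gamma_Z\omega)(X,Y)$ in the $(2,1)$ case, and a hook-symmetry argument for the pure $(3,0)$ case. That last step, which you flagged as the main obstacle, is in fact unproblematic: a trilinear $T$ antisymmetric in $(X,Y)$ and symmetric in $(Y,Z)$ vanishes by the standard six-step permutation chain, without even needing the cyclic sum (though your version, $\sum_{\mathrm{cyc}}=-T$ together with $d\omega^{(3,0)}=0$, also works). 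Your route buys a more transparent, case-by-case verification that bypasses the paper's chain of intermediate identities (\ref{involomega})--(\ref{auxomega}); the paper's route avoids invoking the bigrading $d=d_++d_-$, which it only introduces later in the section, and produces the reusable involutivity criterion (\ref{auxomega}) along the way.
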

\begin{proof}
If we evaluate the $1$-form $\flat_\gamma Z$ on the left hand side of (\ref{NijFgamma}) and take into account (\ref{nablaomegaF}), the involutivity condition of the subbundles $S_\pm$ becomes
\begin{equation}\label{involomega}
(\nabla^\gamma_{FX}\omega)(Z,Y)-(\nabla^\gamma_{FY}\omega)(Z,X) =(\nabla^\gamma_{Y}\omega)(FZ,X)-(\nabla^\gamma_{X}\omega)(FZ,Y).\end{equation}
Changing $Z$ to $FZ$ in (\ref{involomega}) and using the formula
\begin{equation}\label{difcunabla}
d\omega(X,Y,Z)=\sum_{Cycl(X,Y,Z)}(\nabla^\gamma_X\omega)(Y,Z),\end{equation}
we see that the involutivity of $S_\pm$ is equivalent to
\begin{equation}\label{involomega2}
d\omega(X,Y,Z)=(\nabla^\gamma_{Z}\omega)(X,Y)-(\nabla^\gamma_{FX}\omega)(Z,FY)
+(\nabla^\gamma_{FY}\omega)(Z,FX).\end{equation}

Notice that (\ref{Fgamma}) and $\nabla^\gamma(F^2)=0$ imply
\begin{equation}\label{propnablaomega}\begin{array}{c}
(\nabla^\gamma_{Z}\omega)(FX,Y)=\gamma(FX,(\nabla^\gamma_ZF)Y)= -\gamma(X,F(\nabla^\gamma_ZF)Y) \vspace*{2mm}\\ =\gamma(X,(\nabla^\gamma_ZF)FY)=(\nabla^\gamma_{Z}\omega)(X,FY),
\end{array}\end{equation}
whence, also, $(\nabla^\gamma_{Z}\omega)(FX,FY)=(\nabla^\gamma_{Z}\omega)(X,Y)$.

Furthermore, changing the arguments to $FX,FY,Z$ in (\ref{involomega2}), the result becomes
\begin{equation}\label{auxomega}2(\nabla^\gamma_{Z}\omega)(X,Y) =d\omega(X,Y,Z)+d\omega(FX,FY,Z), \end{equation}
which, therefore, is another expression of the involutivity of $S_\pm$. This condition has the following consequences. Take $X,Y,Z$ either all in $S_+$ or all in $S_-$. Then (\ref{auxomega}) yields $d\omega(X,Y,Z)=(\nabla^\gamma_{Z}\omega)(X,Y)$ and, if we add the two cyclic permutation, the result is $d\omega(X,Y,Z)=0$. This allows us to check the following equality for general arguments $X,Y,Z\in T^cM$:
\begin{equation}\label{omega3F} d\omega(FX,Y,Z)+d\omega(X,FY,Z)+d\omega(X,Y,FZ) =-d\omega(FX,FY,FZ)\end{equation} (check for each possible combination of arguments that are eigenvectors of $F$).

Now, modulo (\ref{nablaomegaF}) and (\ref{auxomega}), the integrability condition (\ref{paraKF}) becomes
\begin{equation}\label{intaux} d\omega(X,Y,Z)+d\omega(FX,FY,Z)=
{\rm i}[d\psi(X,FY,Z)+d\psi(FX,Y,Z)].\end{equation}
Furthermore, notice the following direct consequence of (\ref{paraKF}), (\ref{difcunabla}):
\begin{equation}\label{ciclic2} d\omega(X,Y,Z)={\rm i}[d\psi(FX,Y,Z)+d\psi(X,FY,Z)+d\psi(X,Y,FZ)].\end{equation}
If (\ref{ciclic2}) is inserted into (\ref{intaux}) and $Z$ is replaced by $FZ$, we get the equality (\ref{necint}), which, therefore, is a necessary condition of integrability.
We will show that condition (\ref{necint}) is also sufficient for integrability. Firstly, we notice that (\ref{necint}) implies $d\psi(X,Y,Z)=0,\,\forall X,Y,Z\in S_\pm$ and, checking on arguments that are eigenvectors of $F$, we get
\begin{equation}\label{3dpsi} d\psi(FX,Y,Z)+d\psi(X,FY,Z)+d\psi(X,Y,FZ)=-d\psi(FX,FY,FZ).
\end{equation}
If we calculate the left hand side of (\ref{intaux}) modulo (\ref{necint}) and take into account (\ref{3dpsi}), the result is exactly the right hand side of (\ref{intaux}) and we are done. \end{proof}
\begin{corol}\label{integrcupsizero2} A generalized almost para-Hermitian structure associated to a triple $(\gamma,\psi,F)$ where $d\psi=0$ is generalized para-K\"ahler iff the eigenbundles $S_\pm$ of $F$ are involutive and the $2$-form $\omega$ is closed.\end{corol}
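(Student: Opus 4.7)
The plan is to deduce this corollary as a direct specialization of Theorem \ref{th2integr}. By that theorem, the structure $(\gamma,\psi,F)$ is integrable iff the eigenbundles $S_\pm$ of $F$ are involutive and the identity (\ref{necint}), namely $d\psi(X,Y,Z)={\rm i}d\omega(FX,FY,FZ)$, holds for all $X,Y,Z\in T^cM$. So I would just substitute $d\psi=0$ into this criterion and show that it is equivalent to $d\omega=0$.

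First I would treat the ``only if'' direction: setting $d\psi=0$ forces $d\omega(FX,FY,FZ)=0$ for all $X,Y,Z\in T^cM$. Since $F^2=Id$, $F$ is a pointwise isomorphism of $T^cM$, so as $X,Y,Z$ range over $T^cM$, so do $FX,FY,FZ$; hence this vanishing is equivalent to $d\omega\equiv 0$ on $T^cM$, and then by restriction (or complex linearity) to $d\omega=0$ on $TM$. For the converse, assuming $d\psi=0$ and $d\omega=0$, both sides of (\ref{necint}) vanish identically, so Theorem \ref{th2integr} gives integrability provided $S_\pm$ are involutive.

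There is no real obstacle here: the only tiny point worth flagging is that the identity (\ref{necint}) involves the composite $d\omega\circ(F\otimes F\otimes F)$ rather than $d\omega$ itself, but the involutivity of $F$ makes this composite vanish exactly when $d\omega$ does. Consequently the corollary follows immediately without any further computation involving $\nabla^\gamma$ or the auxiliary identities (\ref{auxomega})--(\ref{3dpsi}) used in the proof of Theorem \ref{th2integr}.
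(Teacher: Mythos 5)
Your argument is correct and is exactly what the paper means when it calls the corollary ``a straightforward consequence of Theorem \ref{th2integr}'': substitute $d\psi=0$ into (\ref{necint}) and use that $F$ is invertible ($F^2=Id$) to identify the vanishing of $d\omega\circ(F\otimes F\otimes F)$ with that of $d\omega$. No gap, and no difference in approach from the paper.
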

\begin{proof} This is a straightforward consequence of Theorem \ref{th2integr}. \end{proof}
\begin{example}\label{exclas} {\rm If $(\gamma,F),(\gamma,J)$ are a classical para-K\"ahler and classical K\"ahler structure of a manifold $M$, the corresponding generalized structures (\ref{matrixFJ}), (\ref{exKahler}) are integrable. Similarly, if the quadruple $(\gamma,\psi,J_\pm)$, with an anti-commuting pair $J_\pm$ and a closed form $\psi$, defines a generalized K\"ahler structure, the triple $(\gamma,\psi,F)$, where $F=\alpha J_++{\rm i}\beta J_-$ ($\alpha,\beta\in\mathds{R}, \beta^2-\alpha^2=1$) (see Example \ref{ex2Janti}) defines a generalized para-K\"ahler structure. Indeed, in the considered case, the generalized K\"ahler condition is $\nabla^\gamma J_\pm=0$ \cite{{Galt},{IV}}, which implies $\nabla^\gamma F=0$.
The previous conclusion also holds for the second case of Example \ref{ex2Janti}, i.e., $F=\alpha K+{\rm i}\beta L$.

We shall give the following concrete example. Take $M=\mathds{C}^2$ with the coordinates
$$z^1=x^1+{\rm i}y^1,\,z^2=x^2+{\rm i}y^2,\;x^1,x^2,y^1,y^2\in\mathds{R}.$$
Put $$\gamma={\rm i}(dz^1\odot d\bar{z}^2-dz^2\odot d\bar{z}^1)=2(dx^1\odot dy^2-dx^2\odot dy^1),$$
where $\odot$ is the symmetrized tensor product, and consider the real $(1,1)$-tensor fields
$$K\frac{\partial}{\partial z^a}={\rm i}\frac{\partial}{\partial z^a},\;
L\frac{\partial}{\partial x^a}=\frac{\partial}{\partial x^a}, L\frac{\partial}{\partial y^a}=-\frac{\partial}{\partial y^a},\;a=1,2.$$ It is easy to check that $K,L$ anti-commute, $(\gamma,K)$ is a K\"ahler structure and $(\gamma,L)$ is a para-K\"ahler structure. The K\"ahler form of $(\gamma,K)$ and the para-K\"ahler (fundamental) form of $(\gamma,K)$ are
$$\omega_K=dx^1\wedge dx^2+dy^1\wedge dy^2,\;\omega_L=dx^2\wedge dy^1+dy^2\wedge dx^1.$$
Hence, $(\gamma,\psi,F=\alpha K+{\rm i}\beta L)$, with $d\psi=0$ and real coefficients $\alpha,\beta$ such that $\alpha^2+\beta^2=1$, defines a generalized para-K\"ahler structure.
Furthermore, if we quotientize by a lattice, the structure descends to the $4$-dimensional torus.
On the other hand, we can obtain a generalized almost para-Hermitian structure on the Hopf manifold $(\mathds{C}^2\setminus\{0\})/\Delta_\lambda$, where $\Delta_\lambda$ is the group generated by the transformation $(z^1,z^2)\mapsto(\lambda z^1,\lambda z^2)$ with a real constant $\lambda\neq0,1$. Namely, consider the same $F$ as above and the metric $\gamma/(z^1\bar{z}^1
+z^2\bar{z}^2)$ on $(\mathds{C}^2\setminus\{0\})$ and go down to the quotient manifold. This structure is not integrable even though the $\pm1$-eigenbundles of $F$ are involutive.} \end{example}
\begin{example}\label{prodK} {\rm The product $M\times M'$ of a classical para-K\"ahler manifold $M$ and a classical K\"ahler manifold $M'$ with the structure defined in Example \ref{proddir}, where we assume that $\psi,\psi'$ are closed, is a generalized para-K\"ahler manifold. Indeed, it satisfies the condition $\nabla^\gamma F=0$.

More generally, assume that $p:M\rightarrow N$ is a locally trivial fiber bundle with fiber $S$ and structural group $G$. Assume that $N$ has a classical para-K\"ahler structure $(\gamma_N,F_N)$ and $S$ has a $G$-invariant K\"ahler structure $(\gamma_S,F_S)$. Then, $M$ has a generalized para-K\"ahler structure obtained by gluing-up the product structures of the domains $U_\alpha\times S$, where $\{U_\alpha\}$ is a covering of $N$ by local trivializing neighborhoods. The same holds if $N$ is K\"ahler and $S$ has a $G$-invariant para-K\"ahler structure.

A concrete example is that of the manifold $M=(E\times(\mathds{C}^m\setminus\{0\}))/\mathds{R}$, where $E=\{(x^k,y^k)\,/\,\sum_{k=1}^nx^ky^k=1\}\subset\mathds{R}^{2n}$ and the additive group $\mathds{R}$ acts by
\begin{equation}\label{action}
(x^k,y^k,z^u)\mapsto(e^tx^k,e^{-t}y^k,e^{it}z^u),\;\;t\in\mathds{R},
\end{equation}
where $(z^u)$ are the natural coordinates of $\mathds{C}^m$.
Then,  $(x^k,y^k,z^u)\mapsto(x^k,y^k)$ defines a submersion $p:M\rightarrow\mathds{B}\mathbf{P}^{n-1}$, where $\mathds{B}\mathbf{P}^{n-1}$ is the paracomplex projective model, a known example of a para-K\"ahler manifold \cite{{GA},{VJMP}}.
Hence, $M$ is a locally trivial bundle over $\mathds{B}\mathbf{P}^{n-1}$ with fiber $\mathds{C}^m\setminus\{0\}$ and structural group $S^1$. Since the latter preserves the classical K\"ahler structure of $\mathds{C}^m\setminus\{0\}$, $M$ has a generalized para-K\"ahler structure. Formula (\ref{action}) shows that the pair $(\gamma,F)$ of this structure is induced by the tensor fields
\begin{equation}\label{modelF} \begin{array}{l}
\gamma=\sum_{k=1}^ndx^k\otimes dy^k +\sum_{u=1}^mdz^u\otimes d\bar{z}^u,\vspace*{2mm}\\ dx^k\circ F=dx^k,\,dy^k\circ F=-dy^k,\,dz^u\circ F=dz^u,\,d\bar{z}^u\circ F=-d\bar{z}^u\end{array}
\end{equation}
and any closed $2$-form $\psi$ may be added.

Notice that $dim\,M=2n+2m-2$ and the differentiable structure of $M$ is given by the following, local, non-homogeneous coordinates. Cover $M$ by the domains $U_{(k)}|_{k=1}^n=pr_M\{(x^k,y^k,z^u)\in E\times(\mathds{C}^{2m}\setminus\{0\})\,/\,x^k\neq0\}$ (obviously, $x^k$ cannot all vanish at the same point) and define the coordinates on $U_{(k)}$ by
$$(X^h_{(k)}=x^h/|x^k|,Y^h_{(k)}=y^h|x^k|,Z^u_{(k)}=e^{-iln|x^k|}z^u)\,(h\neq k)$$
($y^k$ is determined by the equation $\sum_{h=1}^nx^hy^h=1$ where the term $x^ky^k$ becomes ${\rm sgn}(x^k)y^k$). $M$ also has a second foliation, which is defined by the quotient of the $2n$-plans $span\{\partial/\partial x^k\}$ under the action of $\mathds{R}$ and it is $\gamma$-orthogonal to the fibers. Thus, $M$ is a locally product manifold.}\end{example}
\begin{rem}\label{obssplit} {\rm The previous example belongs to the class of {\it split generalized para-K\"ahler manifolds}. Indeed, in the representation $F=F_1+{\rm i}F_2$, $F_1=0$ on the tangent spaces of the fibers and $F_2=0$ on the orthogonal spaces, therefore, $F_1\circ F_2=F_2\circ F_1=0$.  It would be interesting to get more information about this class of manifolds. Here, we just notice the following simple fact. Recall the decomposition $TM=P\oplus Q$ (Section 2). Since $F_1^3=F_1, F_2^3=-F_2$, we get $P=P_+\oplus P_-$, where the terms are the $\pm$-eigenbundles of $F_1$, and $Q^c=E\oplus\bar{E}$, where $E$ is the $E$-eigenbundle of $F_2$. In the generalized para-K\"ahler case, the evaluation of $N_F=0$ on arguments in $P_\pm$ shows that the real subbundles $P_\pm$ are foliations. Similarly, the evaluation on $E$ shows that the complex bundles $E,\bar{E}$ are involutive, hence, if $Q$ is a foliation, the leaves are complex manifolds.} \end{rem}
\begin{example}\label{homogintegr} {\rm The invariant structure of a homogeneous space $G/H$ defined in Example \ref{omogen} is integrable iff, with the notation of the latter, for any $ad\,\mathfrak{h}$-invariant vectors $X_1,X_2,X_3\in\mathfrak{g}/\mathfrak{h}$, the following two relations hold:
$$[\tilde{F}X_1,\tilde{F}X_2]_{\mathfrak{g}}-\tilde{F}[\tilde{F}X_1,X_2]_{\mathfrak{g}}
-\tilde{F}[X_1,\tilde{F}X_2]_{\mathfrak{g}}+[X_1,X_2]_{\mathfrak{g}}=0,$$
$$\sum_{Cycl(1,2,3)}\tilde{\psi}(\tilde{X}_1,[\tilde{X}_2,\tilde{X}_3]) ={\rm i}\sum_{Cycl(1,2,3)}\tilde{\omega}(\tilde{X}_1,[\tilde{X}_2,\tilde{X}_3]).$$
In these relations the bracket $[\,,\,]_{\mathfrak{g}}$ is naturally induced by the bracket of $\mathfrak{g}$ and, using the expression of the Nijenhuis tensor and of the exterior differential, the result follows from Theorem \ref{th2integr} since $[\,,\,]_{\mathfrak{g}}$ corresponds to the Lie bracket of $ad\,\mathfrak{h}$-invariant vector fields on $M$.} \end{example}
\begin{example}\label{exlift} {\rm Let $p:TM\rightarrow M$ be a tangent bundle with local coordinates $(x^i,y^i)$ on the total space, where $(x^i)$ are coordinates on $M$ and $(y^i)$ are corresponding vector coordinates. We will denote the vertical and complete lifts of tensors from $M$ to $TM$ by upper indices $\mathbf{v},\mathbf{c}$ and recall the following definitions \cite{YI} \begin{equation}\label{deflifts}
\begin{array}{l} f^\mathfrak{v}=p^*f,\,f^\mathfrak{c}=W(f^\mathbf{v})\;(W=[y^i(\partial/\partial x^i)]_\mathfrak{V}\in T(TM)/\mathfrak{V}),\vspace*{2mm}\\ X^\mathfrak{v}f^\mathfrak{v}=0,\,X^\mathfrak{v}f^\mathfrak{c} =X^\mathfrak{c}f^\mathfrak{v}=(Xf)^\mathfrak{v},\, X^\mathfrak{c}f^\mathfrak{c}=(Xf)^\mathfrak{c},\vspace*{2mm}\\ \alpha^\mathfrak{v}(X^\mathfrak{v})=0,\,\alpha^\mathfrak{v}(X^\mathfrak{c}) =\alpha^\mathfrak{c}(X^\mathfrak{v})=(\alpha(X))^\mathfrak{v}, \, \alpha^\mathfrak{c}(X^\mathfrak{c})=(\alpha(X))^\mathfrak{c},\vspace*{2mm}\\  (P\otimes Q)^\mathfrak{v}=P^\mathfrak{v}\otimes Q^\mathfrak{v}, (P\otimes Q)^\mathfrak{c}=P^\mathfrak{c}\otimes Q^\mathfrak{v}+P^\mathfrak{v}\otimes Q^\mathfrak{c},\end{array}\end{equation}
where $\mathfrak{V}$ is the vertical bundle, tangent to the fibers, $f\in C^\infty(M),X\in \chi(M),\alpha\in\Omega^1(M)$ and $P,Q$ are arbitrary tensor fields on $M$.
On the other hand, any subbundle $S\subseteq TM$ produces a subbundle $S^\mathbf{t}\subseteq T\mathcal{T}M$, which we call the {\it tangent lift} of $S$ and has $rank\,S^\mathbf{t}=2rank\,S$. Namely, if we denote by underlining the sheaf of germs of sections of a bundle, then, $\underline{S}^\mathbf{t}=span_{\underline{M\times\mathds{R}}}\{X^\mathbf{c},X^\mathbf{v}\,/\,X\in\underline{S}\}$.

Now, assume that $M$ has a generalized almost para-Hermitian structure defined by a triple $(\gamma,\psi,F)$ and consider the triple $(\gamma^\mathfrak{c},\psi^\mathfrak{c},F^\mathfrak{c})$. Using (\ref{deflifts}) we get
\begin{equation}\label{liftstr} \begin{array}{c}
\gamma^\mathbf{c}(X^\mathbf{v},Y^\mathbf{v})=0,\,
\gamma^\mathbf{c}(X^\mathbf{c},Y^\mathbf{v})=(\gamma(X,Y))^\mathfrak{v},\vspace*{2mm}\\	 \gamma^\mathbf{c}(X^\mathbf{c},Y^\mathbf{c})=(\gamma(X,Y))^\mathfrak{c},\vspace*{2mm}\\ F^\mathbf{c}X^\mathbf{v}=(FX)^\mathbf{v},\, F^\mathbf{c}X^\mathbf{c}=(FX)^\mathbf{c},\vspace*{2mm}\\
\omega^\mathbf{c}(X^\mathbf{v},Y^\mathbf{v})=0,\,
\omega^\mathbf{c}(X^\mathbf{c},Y^\mathbf{v})=(\omega(X,Y))^\mathfrak{v},\vspace*{2mm}\\ \omega^\mathbf{c}(X^\mathbf{c},Y^\mathbf{c})=(\omega(X,Y))^\mathfrak{c},
\end{array}\end{equation}
where $\omega$ is the $2$-form associated to $(\gamma,F)$.

Using (\ref{liftstr}), it follows easily that
$(F^\mathbf{c})^2=Id$, $F^\mathbf{c}$ is $\gamma^\mathbf{c}$-skew-symmetric and the $\pm1$-eigenbundles of $F^\mathbf{c}$ are the tangent lift $S_\pm^\mathbf{t}$ of the $\pm1$-eigenbundles $S_\pm$ of $F$. Therefore, the triple $(\gamma^\mathfrak{c},\psi^\mathfrak{c},F^\mathfrak{c})$ defines a generalized almost para-Hermitian structure on the tangent manifold $TM$. Moreover, since one has $[X^\mathbf{c},Y^\mathbf{c}]=[X,Y]^\mathbf{c},
[X^\mathbf{c},Y^\mathbf{v}]= [X,Y]^\mathbf{v},[X^\mathbf{v},Y^\mathbf{v}]=0$, if the subbundles $S_\pm$ are involutive the subbundles $S_\pm^\mathbf{t}$ are involutive too. Finally, (\ref{liftstr}) also implies that the $2$-form $\omega$ of the lifted structure is just the complete lift $\omega^\mathbf{c}$. Furthermore, for any form $\lambda\in\Omega^k(M)$, if we evaluate $\lambda^\mathbf{c}$ on vertical and complete lifts, we get zero, except for
$$\lambda^\mathbf{c}(X_1^\mathbf{c},...,X_{k-1}^\mathbf{c},X_k^\mathbf{v}) =(\lambda(X_1,...,X_k))^\mathbf{v}, \lambda^\mathbf{c}(X_1^\mathbf{c},...,X_{k-1}^\mathbf{c},X_k^\mathbf{c}) =(\lambda(X_1,...,X_k))^\mathbf{c}$$ and $d\lambda^\mathbf{c}=(d\lambda)^\mathbf{c}$.
This allows us to check that, if integrability condition (\ref{necint}) holds for the structure of $M$, it also holds for the lifted structure of $\mathcal{T}M$. Therefore, if $M$ is a para-K\"ahler manifold the tangent manifold $\mathcal{T}M$ is also para-K\"ahler.}\end{example}

We end this section by a few remarks on manifolds $(M,F)$, where $F$ is a complex $(1,1)$-tensor field with involutive eigenbundles $S_\pm$ and $F^2=Id$ (including the generalized para-K\"ahler manifolds), in the footsteps of complex geometry. The functions and forms below are complex valued even though this is not apparent in notation. The decomposition $T^cM=S_+\oplus S_-$ yields a bi-grading $\Omega^{\bullet}=\oplus_{p,q\geq0}\Omega_{p,q}$, where the terms are the spaces of forms of {\it type} $(p,q)$, defined by the condition that they take the value $0$ unless evaluated on $p$ arguments in $S_+$ and $q$ arguments in $S_-$. Furthermore, one has $d=d_++d_-$, where $d_+:\Omega_{p,q}\rightarrow\Omega_{p+1,q}$, $d_-:\Omega_{p,q}\rightarrow\Omega_{p,q+1}$ and $d_\pm^2=0$, $d_+d_-+d_-d_+=0$. The bi-grading defines a double complex structure on the complex-valued de Rham complex of $M$ and spectral sequence theory may give information about de cohomology $H^{\bullet}(M,\mathds{C})$. In particular, we have corresponding $d_\pm$-cohomology spaces $^+\hspace{-2pt}H^{\bullet}_{q}(M,F),^-\hspace{-1mm}H^{\bullet}_{p}(M,F)$
defined by the complexes
\begin{equation}\label{pmcomplexes}
\begin{array}{l} \Omega_{0,q}(M)\stackrel{d_+}{\rightarrow}\Omega_{1,q}(M) \stackrel{d_+}{\rightarrow}\Omega_{2,q}(M)\stackrel{d_+}{\rightarrow}....,\vspace*{2mm}\\ \Omega_{p,0}(M)\stackrel{d_-}{\rightarrow}\Omega_{p,1}(M) \stackrel{d_-}{\rightarrow}\Omega_{p,2}(M)
\stackrel{d_-}{\rightarrow}.....\end{array}\end{equation}

For functions, we have $<d_\pm f,X_\pm>=X_\pm f,<d_\pm f,X_\mp>=0$ and we will denote $C_\pm^\infty(M)=\{f\in C^\infty(M,\mathds{C})\,/\,d_\mp f=0\}$ the algebras of {\it $F$-positive} and {\it $F$-negative} functions, respectively. Furthermore, we may define an {\it $F$-positive ($F$-negative)} vector bundle as a vector bundle $V$ endowed with an atlas of local trivializations that has $F$-positive ($F$-negative) transition functions. Then, acting on components, the operators $d_\mp$ extend to $V$-valued forms and we may define {\it $F$-positive}, respectively {\it $F$-negative} cross sections $s\in\Gamma V$ by the condition $d_{\mp}s=0$. The bundles $S_\pm$ may not be $F$-positive ($F$-negative). We can define $F$-{\it positive} ($F$-{\it negative}) vector fields $X$ on $M$ by the condition $[X'_\mp,X]\in S_\mp$, $\forall X'_\mp\in S_\mp$. Then, the field $fX$ is also $F$-positive ($F$-negative) iff $f\in C_\pm^\infty(M)$. Therefore, if $S_\pm$ has local bases of $F$-positive ($F$-negative) sections, it will be an $F$-positive ($F$-negative) bundle.

We shall indicate an interesting case where the complexes of sheaves associated to (\ref{pmcomplexes}) are exact resolutions of the sheafs of germs of $F$-negative ($F$-positive) functions, i.e., where $d_\pm$ satisfies a local Poincar\'e lemma. Recall that a complex subbundle $S\subseteq T^cM$ of rank $r$ is {\it Nirenberg integrable} if: (1) $S$ is involutive, (2) $k=dim\,S\cap\bar{S}=const.$ and $S+\bar{S}$ is involutive. Nirenberg integrability is characterized by the local existence of real, differentiable functions $y^a,t^u$, $a=1,...,k$, $u=1,...,dim\,M-2r+k$ and complex, differentiable functions $z^\alpha$, $\alpha=1,...,r-k$ such that $(y^a,t^u,z^\alpha,\bar{z}^\alpha)$ are functionally independent and $S$ has the local equations $dt^u=0, dz^\alpha=0$ \cite{Nir}.
\begin{prop}\label{Poincare} (Poincar\'e lemma) Assume that the complex tensor field $F$ ($F^2=Id$) has the following properties: (i) the two eigenbundles $S_\pm$ are Nirenberg integrable, (ii) $S_-\cap \bar{S}_+=0$. If $\lambda_{p,q}$, where $q\geq1$ satisfies the condition $d_-\lambda=0$, then, there exists a local form $\mu_{p,q-1}$ such that $\lambda=d_-\mu$. A similar result holds if we change the role of the indices $+,-$.\end{prop}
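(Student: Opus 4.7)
The plan is to reduce the $d_-$-Poincaré lemma to the classical Dolbeault--Grothendieck $\bar\partial$-Poincaré lemma (together with the ordinary $d$-Poincaré lemma in the real directions) by choosing Nirenberg coordinates adapted to $S_-$ and using the hypothesis $S_-\cap\bar S_+ = 0$ to ensure that $d_-$ takes a standard form in those coordinates.

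First, apply Nirenberg's theorem to $S_-$, which is Nirenberg integrable by hypothesis (i): there exist local real coordinates $y^a,t^u$ ($a,u=1,\dots,k$) and complex coordinates $z^\alpha$ ($\alpha=1,\dots,n-k$), where $n=\mathrm{rank}\,S_-$ and $k=\dim(S_-\cap\bar S_-)$, such that $S_-$ is defined by $dt^u=0,\ dz^\alpha=0$. Hence $S_-$ is spanned by $\partial/\partial y^a,\partial/\partial\bar z^\alpha$, and $\mathrm{Ann}(S_-)=\mathrm{span}(dt^u,dz^\alpha)$ consists of the basic type-$(1,0)$ $1$-forms. Dualising the hypothesis $S_-\cap\bar S_+=0$ gives $(T^cM)^*=\mathrm{Ann}(S_-)+\mathrm{Ann}(\bar S_+)$; combining this with the decomposition $(T^cM)^*=\mathrm{Ann}(S_-)\oplus\mathrm{Ann}(S_+)$ coming from $T^cM=S_+\oplus S_-$ allows one to choose a basis $\theta^a=dy^a+\eta^a$, $\theta^\alpha=d\bar z^\alpha+\eta^\alpha$ of $\mathrm{Ann}(S_+)$ with $\eta^a,\eta^\alpha\in\mathrm{Ann}(S_-)$. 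These $\theta$'s serve as the local type-$(0,1)$ generators.

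A form $\lambda\in\Omega_{p,q}$ now expands uniquely in wedges of $dt^u,dz^\alpha$ (type $(1,0)$) and $\theta^a,\theta^\alpha$ (type $(0,1)$). Involutivity of $S_\pm$, which is exactly what allows the split $d=d_++d_-$, controls $d_-(dt^u)$, $d_-(dz^\alpha)$ and $d_-(\theta^{?})$ as expected, and a direct computation reduces $d_-\lambda=0$ to a system on the coefficient functions involving only the derivatives $\partial/\partial y^a$ and $\partial/\partial\bar z^\alpha$, with the remaining variables $t^u,z^\alpha$ entering as smooth parameters. Solving this system by the Dolbeault--Grothendieck $\bar\partial$-Poincaré lemma in the $\bar z^\alpha$ directions followed by the ordinary Poincaré lemma in the real $y^a$ directions produces the required local primitive $\mu\in\Omega_{p,q-1}$ with $\lambda=d_-\mu$. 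Exchanging the roles of $+$ and $-$ throughout yields the companion statement.

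The main obstacle is the computation in the last step: one must verify that $d_-\lambda=0$ actually reduces to a decoupled system of the $\bar\partial+d$ type with no residual mixing of derivatives. This decoupling rests squarely on the hypothesis $S_-\cap\bar S_+=0$: it is precisely this transversality that forces each $\theta^\alpha$ to retain its $d\bar z^\alpha$-leading term modulo $\mathrm{Ann}(S_-)$, so that applying $d_-$ recovers genuine $\partial/\partial\bar z^\alpha$-derivatives. Without it, $\mathrm{Ann}(S_+)$ could tilt toward $\mathrm{Ann}(\bar S_-)=\mathrm{span}(dt^u,d\bar z^\alpha)$, entangling the $\bar\partial$- and $d$-parts of the system, and the composed application of the two classical Poincaré lemmas would break down.
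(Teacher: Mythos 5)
Your strategy is sound and genuinely different from the paper's. The paper applies Nirenberg's theorem to \emph{both} $S_+$ and $S_-$, obtaining a local coframe of $T^{c*}M$ consisting entirely of closed forms ($dt_-^{u_-},dz_-^{\alpha_-}$ spanning the $(1,0)$-forms and $dt_+^{u_+},dz_+^{\alpha_+}$ spanning the $(0,1)$-forms), and then runs an induction on the highest $(1,0)$-index occurring in $\lambda$ to reduce to type $(0,q)$; hypothesis (ii) is invoked only at that base case, to guarantee that $z_-^{\alpha_-}$ and $\bar z_+^{\alpha_+}$ are functionally independent so that the usual Poincar\'e lemma applies in the coordinates $(t_+^{u_+},z_+^{\alpha_+})$ with $(t_-^{u_-},z_-^{\alpha_-})$ as parameters. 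You instead use Nirenberg coordinates only for $S_-$ and build the $(0,1)$-coframe by correcting $dy^a,d\bar z^\alpha$ by terms in $ann\,S_-$; this buys you a base case that is the honest Dolbeault--Grothendieck lemma in $\bar z^\alpha$ combined with de Rham in $y^a$ (a standard double-complex argument), at the price of a coframe that is no longer closed.

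Two points need attention. First, the ``direct computation'' you defer is the crux of the whole proof: you must verify that $d_-$ annihilates every coframe element, so that $d_-$ acts on $\lambda$ purely through the derivatives $\partial/\partial y^a$ and $\partial/\partial\bar z^\alpha$ of the coefficients. This is in fact true, but for a reason you do not give: $d_-(dt^u)=d_-(dz^\alpha)=0$ because these forms are closed, and $d_-\theta^a$ is the $(0,2)$-component of $d\eta^a$, which vanishes because $\eta^a\in ann\,S_-$ and $S_-$ is involutive (evaluate $d\eta^a$ on two sections of $S_-$). Second, and relatedly, your attribution of the decoupling to hypothesis (ii) is incorrect: the normalization $\theta^a=dy^a+\eta^a$, $\theta^\alpha=d\bar z^\alpha+\eta^\alpha$ with $\eta^a,\eta^\alpha\in ann\,S_-$ already follows from the splitting $T^{c*}M=ann\,S_-\oplus ann\,S_+$, i.e.\ from $T^cM=S_+\oplus S_-$ alone --- simply take $\theta^a$ to be the $(0,1)$-component of $dy^a$, whose restriction to $S_-$ is automatically dual to $\partial/\partial y^a$. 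As written, hypothesis (ii) plays no visible role anywhere in your argument, whereas the paper's proof uses it explicitly at its base case. You should either locate the precise step of your argument where (ii) is genuinely required, or state explicitly that your route does not use it (in which case you are claiming a stronger statement than the proposition, and that claim needs to be defended rather than left implicit).
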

\begin{proof} As recalled above, we have the local functions $y_\pm^{a_\pm},t_\pm^{u_\pm},z_\pm^{\alpha_\pm}$ such that $ann\,S_\pm$ is generated by $(dt_\pm^{u_\pm},dz_\pm^{\alpha_\pm})$ (the signs distinguish between the two bundles and must be used concordantly). We have $r=rank\,S_\pm=n$ and, since $S_+\cap S_-=0$, the $1$-forms $(dt_-^{u_-},dz_-^{\alpha_-},dt_+^{u_+},dz_+^{\alpha_+})$ are a local basis of $T^{c*}M$, where $(dt_-^{u_-},dz_-^{\alpha_-})$ is a basis of the $(1,0)$-forms and $dt_+^{u_+},dz_+^{\alpha_+}$ is a basis of the $(0,1)$-forms. We will denote the elements of these bases by $\epsilon^i,\kappa^j$, respectively, therefore, $d\epsilon^i=0,d\kappa^j=0$.
Accordingly, we have the local expression
\begin{equation}\label{exprlambda}
\lambda=\frac{1}{p!q!}\lambda_{i_1...i_pj_1...j_q}\epsilon^{i_1} \wedge...\wedge\epsilon^{i_p} \wedge\kappa^{j_1}\wedge...\wedge\kappa^{j_q}.\end{equation}

We will continue by using the same induction as in the case of foliations \cite{VCz}, induction on the index $h$ defined as the largest among the indices of $\epsilon^1,...,\epsilon^n$ that is actually contained in (\ref{exprlambda}). We shall refer to the induction start $h=0$ at the end and, first, we show that, if the result holds for largest indices $l<h$, it also holds for $h$. If $h$ is the indicated largest index of $\lambda$, we have $\lambda=\epsilon^h\wedge\lambda'_{p-1,q}+\lambda''_{p,q}$, where the largest index of $\lambda',\lambda''$ is $<h$. Since $d\epsilon^h=0$, $d_-\lambda=0$ is equivalent to $d_-\lambda'=0,d_-\lambda''=0$, which is assumed to imply $d_-\lambda'=d_-\mu',d_-\lambda''=d_-\mu''$, whence $\lambda=-d_-(\epsilon^h\wedge\mu'+\mu'')$.

For the case $h=0$, $\lambda$ is of type $(0,q)$ and we shall need hypothesis (ii). The latter implies that there is no relation between the functions $z_-^{\alpha_-},\bar{z}_+^{\alpha^+}$. In this situation, if
$d_-\lambda_{0,q}=0$, the consequence $\lambda_{0,q}=d_-\mu_{0,q-1}$ follows from the usual Poincar\'e lemma in the space of the coordinates $(t_+^{u_+},z_+^{\alpha_+})$, with $(t_-^{u_-},z_-^{\alpha_-})$ seen as parameters. Note that, if $F=F_1+{\rm i}F_2$, then, (ii) holds if either $F_1$ is non degenerate or $F_2$ does not have eigenvalue ${\rm i}$. The result for $d_+$ will be obtained similarly.
\end{proof}

The type decomposition of $\xi\in T^{*c}M$ is $\xi=\xi_++\xi_-$, where $\xi_+=(1/2)(\xi\circ(Id+F)),\xi_-=(1/2)(\xi\circ(Id-F))$ and the symbols of $d_\pm$ are $$(\sigma(d_+)(\xi))\lambda=\xi_+\wedge\lambda,\, (\sigma(d_-)(\xi))\lambda=\xi_-\wedge\lambda, \;\xi\in T^*M,\,\forall\lambda\in\Omega^{\bullet}(M).$$

For $\xi_\pm\neq0$, we have $(\sigma(d_\pm)(\xi))\lambda=0$ iff $\exists\lambda'\in\Omega^{\bullet}(M)$	 such that $\lambda=\xi_\pm\wedge\lambda'$.
Therefore, the first, respectively the second, complex (\ref{pmcomplexes}) is elliptic iff for any real covector $\xi$, $\xi_+=0$ implies $\xi=0$, respectively, $\xi_-=0$ implies $\xi=0$. We can write this condition as $(ann\,S_\pm)\cap (T^*M)=0$, where the terms of the intersection are real subspaces of the real dual of the $4n$-dimensional space $T^{cr}M=TM\oplus({\rm i}TM)$ defined by the real structure of the complexification (${\rm i}TM$ is also seen as a real vector space).  In terms of the annihilated spaces, this condition becomes $S_\pm+{\rm i}TM=T^{cr}M$, which, because the terms have real dimension $2n$, is the same as $S_\pm\cap({\rm i}TM)=0$. In a different way, for a real $\xi$, $\xi_\pm=(1/2)[\xi\pm(\xi\circ F_1+{\rm i}\xi\circ F_2)]$ and the result is zero iff $\xi\pm\xi\circ F_1=0,\xi\circ F_2=0$. These conditions imply $\xi=0$ for $\xi\in (ker\,F_2^*)\cap(ann\,(Id\pm F_1)^*)$. In particular, if $F_2$ is an isomorphism both complexes are elliptic.

If there exists a real tangent vector $Z\neq0$ such that ${\rm i}Z\in S_\pm$, then $\overline{{\rm i}Z}=-{\rm i}Z\in S_\pm$ and ${\rm i}Z\in S_\pm\cap\bar S_\pm$. Therefore, if we ask $S_\pm\cap\bar S_\pm=0$, we get an elliptic complex. $S_\pm\cap\bar S_\pm=0$ means that $S_\pm$ is the ${\rm i}$-eigenbundle of some almost complex structure $\Phi$ ($\Phi^2=-Id$) on $M$, i.e.,
$$F(X-{\rm i}\Phi X)=\pm(X-{\rm i}\Phi X),\,\Phi(X\pm FX)={\rm i}(X\pm FX).$$
With $F=F_1+{\rm i}F_2$, these conditions reduce to
\begin{equation}\label{auxelip} \begin{array}{l}
F_2=(F_1\mp Id)\circ\Phi\,\Leftrightarrow\,F_1+F_2\Phi=\pm Id,\vspace*{2mm}\\ F_2=\mp\Phi\circ(Id\pm F_1) \,\Leftrightarrow\,F_1=\Phi\circ F_2\pm Id,
\end{array}\end{equation}
which implies $F_1\circ\Phi+\Phi\circ F_1=0$. Using that, (\ref{auxelip}) implies
$$0=F_1\circ\Phi+\Phi\circ F_1=\pm F_1^2\circ(Id+\Phi),$$
whence, composing by $(Id-\Phi)$, we get $F_1^2=0$. Furthermore, since $F_1^2-F_2^2=Id$, $F_2^2=-Id$.
Finally, if $\gamma$ is a compatible metric, i.e., $\gamma(F_1X,Y)+\gamma(X,F_1Y)=0$, then, $F_2$ is $\gamma$-skew-symmetric iff $\Phi$ is $\gamma$-skew-symmetric (with $F_2^2=-Id$, (\ref{auxelip}) implies $\Phi=-F_1F_2\pm F_2$, which has to be used to go from $F_2$ to $\Phi$).

For the structure $(\gamma,F)$ and if the complexes (\ref{pmcomplexes}) are elliptic, one may continue to a corresponding Hodge theory.
\section{Submanifolds and reduction}
Let $(M,\mathcal{F},\mathcal{J})$ be a generalized almost para-Hermitian manifold with the corresponding triple $(\gamma,\psi,F)$ and let $\iota:N\subseteq M$ be a submanifold of $M$. We will say that $N$ is a {\it regular invariant submanifold} if $\iota^*\gamma$ is non degenerate and $T^cN$ is invariant by $F$. Then, $N$ has a naturally induced generalized almost para-Hermitian structure $(\mathcal{F}_N,\mathcal{J}_N)$ defined by the triple $(\gamma_N=\iota^*\gamma,\psi_N=\iota^*\psi,F_N=F|_{T^cN})$. All the objects related to the induced structure will be denoted by the index $N$.

We recall the general pullback operation (e.g., \cite{BR})
\begin{equation}\label{defback}
\overleftarrow{f}^*U=\{(X,f^*\eta)\,/\,(f_*X,\eta)\in U\}\subseteq V\oplus V^*,
\end{equation}
where $f_*:V\rightarrow W$ is a linear mapping of vector spaces, $f^*$ is the transposed mapping and $U$ is a subspace of $W\oplus W^*$.

For any submanifold, taking $f_*=\iota_*:T_xN\subseteq T_xM$ $(x\in N)$, formula (\ref{tau}) gives
$$\mathbf{H}_N=\{(X,\flat_{\iota^*\psi+{\rm i}\iota^*\gamma}X)\,/\,X\in TN\}
=\{(X,\iota^*(\flat_{\psi+{\rm i}\gamma}X))\,/\,X\in TN\}=\overleftarrow{\iota}^*\mathbf{H}.$$
In the same way, since we know that $\mathbf{H}_\pm=\tau(S_\pm)$, we get
$\mathbf{H}_{N\pm}=\overleftarrow{\iota}^*\mathbf{H}_\pm$,
which leads to the equalities $\mathbf{F}_{N\pm}=\overleftarrow{\iota}^*\mathbf{F}_\pm,
\,\mathbf{J}_{N}=\overleftarrow{\iota}^*\mathbf{J}$. Hence,	 $\mathcal{F}$ induces a generalized almost para-complex structure and $\mathcal{J}$ induces a generalized almost complex structure of $N$ in the sense of \cite{{BB},{IV}}.
\begin{prop}\label{sbvarind} $\iota:N\subseteq M$ is a regular invariant submanifold iff
\begin{equation}\label{descsbvar} \mathbf{T}^cN=\overleftarrow{\iota}^*\mathbf{H}_+ \oplus\overleftarrow{\iota}^*\mathbf{H}_-\oplus \overleftarrow{\iota}^*\bar{\mathbf{H}}_+ \oplus\overleftarrow{\iota}^*\bar{\mathbf{H}}_-
\end{equation} defines a generalized almost para-Hermitian structure on $N$. In this case, the structure defined by (\ref{descsbvar}) is the induced structure $(\mathcal{F}_N,\mathcal{J}_N)$ defined above.\end{prop}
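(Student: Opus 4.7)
The plan is to use Propositions \ref{gammanondeg}, \ref{defstrcuF}, \ref{propdedesc} as a dictionary between the two sides of the equivalence, the bridge being an explicit description of $\overleftarrow{\iota}^*\mathbf{H}_\pm$. First, combining the pullback formula (\ref{defback}) with $\mathbf{H}_\pm=\tau(S_\pm)$ and (\ref{tau}), one writes
$$\overleftarrow{\iota}^*\mathbf{H}_\pm=\{(X,\iota^*(\flat_{\psi+{\rm i}\gamma}X))\,/\,X\in T^cN,\;\iota_*X\in S_\pm\},$$
so the projection of $\overleftarrow{\iota}^*\mathbf{H}_\pm$ to $T^cN$ equals $T^cN\cap S_\pm$ (with $T^cN$ identified with its image in $T^cM$), and the two fibres have the same complex dimension.

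For the implication ``regular invariant $\Rightarrow$ (\ref{descsbvar}) defines a generalized almost para-Hermitian structure'', I would simply invoke the text preceding the statement, where the equalities $\mathbf{H}_{N\pm}=\overleftarrow{\iota}^*\mathbf{H}_\pm$, $\mathbf{F}_{N\pm}=\overleftarrow{\iota}^*\mathbf{F}_\pm$, $\mathbf{J}_{N}=\overleftarrow{\iota}^*\mathbf{J}$ are derived, and apply Proposition \ref{propdedesc} to the induced triple $(\iota^*\gamma,\iota^*\psi,F|_{T^cN})$, whose hypotheses are exactly those of ``regular invariant''.

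For the converse, assume (\ref{descsbvar}) defines a generalized almost para-Hermitian structure on $N$; by Proposition \ref{propdedesc} this means that the three direct sums $\overleftarrow{\iota}^*\mathbf{H}_+\oplus\overleftarrow{\iota}^*\bar{\mathbf{H}}_+$, $\overleftarrow{\iota}^*\mathbf{H}_-\oplus\overleftarrow{\iota}^*\bar{\mathbf{H}}_-$, $\overleftarrow{\iota}^*\mathbf{H}_+\oplus\overleftarrow{\iota}^*\bar{\mathbf{H}}_-$ are maximally $g$-isotropic in $\mathbf{T}^cN$. Projecting (\ref{descsbvar}) onto $T^cN$ via the above description yields $T^cN=(T^cN\cap S_+)\oplus(T^cN\cap S_-)$; since $F$ acts as $\pm Id$ on $S_\pm$, this forces $F(T^cN)\subseteq T^cN$, i.e. the invariance condition. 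The non-degeneracy of $\iota^*\gamma$ follows by applying Proposition \ref{gammanondeg} to the structure induced on $N$: its $\mathbf{H}_N$-bundle is $\overleftarrow{\iota}^*\mathbf{H}_+\oplus\overleftarrow{\iota}^*\mathbf{H}_-$, which meets $T^cN$ trivially by (\ref{descsbvar}), while the induced symmetric bivector produced by (\ref{gammagen}) is exactly $\iota^*\gamma$ by naturality of $g$ under the pullback of (\ref{defback}). The final ``in this case'' assertion is then immediate from the one-to-one correspondence in Proposition \ref{defstrcuF}, since both the structure built out of (\ref{descsbvar}) and the induced structure $(\mathcal{F}_N,\mathcal{J}_N)$ arise from the same triple $(\iota^*\gamma,\iota^*\psi,F|_{T^cN})$.

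The main obstacle I expect is making watertight the pullback description of $\overleftarrow{\iota}^*\mathbf{H}_\pm$ and the verification that the bivector (\ref{gammagen}) is natural under $\overleftarrow{\iota}^*$, since these are the only pieces not already in place; once they are written down, each direction of the equivalence, and the coincidence with $(\mathcal{F}_N,\mathcal{J}_N)$, becomes a direct reading of Propositions \ref{gammanondeg}, \ref{defstrcuF}, \ref{propdedesc}.
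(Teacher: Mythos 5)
Your route is the paper's: describe $\overleftarrow{\iota}^*\mathbf{H}_\pm$ as the graph of $\flat_{\iota^*\psi+{\rm i}\iota^*\gamma}$ over $T^cN\cap S_\pm$, get the forward implication from the identifications preceding the statement, and in the converse extract the triple of the structure defined by (\ref{descsbvar}) and match it with $(\iota^*\gamma,\iota^*\psi,F|_{T^cN})$.

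The step you flag as the main obstacle is, however, not justifiable the way you propose. The tensor (\ref{gammagen}) is a bivector (the paper itself remarks it is really $\gamma^{-1}$); contravariant tensors are not natural under $\iota^*$, and the bivector of the induced structure is $(\iota^*\gamma)^{-1}$, which is not a restriction or pullback of $\gamma^{-1}$. So ``naturality of $g$ under the pullback of (\ref{defback})'' does not yield $\tilde\gamma=\iota^*\gamma$. The paper's argument instead uses exactly the graph description from your first paragraph: $\tilde{\mathbf H}=\overleftarrow{\iota}^*\mathbf H_+\oplus\overleftarrow{\iota}^*\mathbf H_-=\tau_N(T^cN\cap S_+)\oplus\tau_N(T^cN\cap S_-)$, where $\tau_N(X)=(X,\flat_{\iota^*\psi+{\rm i}\iota^*\gamma}X)$; since $\tilde{\mathbf H}$ is also the graph of $\flat_{\tilde\psi+{\rm i}\tilde\gamma}$ over all of $T^cN$ (it projects isomorphically onto $T^cN$ because $\tilde\gamma$ is nondegenerate by hypothesis), one reads off simultaneously $T^cN=(T^cN\cap S_+)\oplus(T^cN\cap S_-)$ --- which is also the clean way to justify your ``projecting (\ref{descsbvar})'' step, since the projection of the full four-term decomposition is a priori only a sum, not a direct sum --- and $\tilde\psi+{\rm i}\tilde\gamma=\iota^*(\psi+{\rm i}\gamma)$, hence $\tilde\gamma=\iota^*\gamma$ nondegenerate and $\tilde\psi=\iota^*\psi$. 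The paper then identifies $\tilde F=F|_{T^cN}$ from $\tilde{\mathbf F}_\pm=\tau_N(T^cN\cap S_\pm)\oplus\bar\tau_N(T^cN\cap\bar S_\pm)$ and a dimension count giving $T^cN=(T^cN\cap S_\pm)\oplus(T^cN\cap\bar S_\pm)$. With that replacement your argument closes; everything else in your proposal matches the paper.
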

\begin{proof} Since, if the induced structure exists, $\mathbf{H}_{N\pm}=\overleftarrow{\iota}^*\mathbf{H}_\pm$, (\ref{descsbvar}) is the decomposition (\ref{descptparaK}) defined by the induced structure. For the converse, let us denote by a tilde the elements of the generalized almost para-Hermitian structure defined by decomposition (\ref{descsbvar}). Definition (\ref{defback}) of the pullback operation gives
$$\tilde{\mathbf{H}} =\overleftarrow{\iota}^*\mathbf{H}_+ \oplus\overleftarrow{\iota}^*\mathbf{H}_-=\tau(TN^c\cap{S}_+) \oplus\tau(TN^c\cap{S}_-),$$ where $\tau$ is defined on $N$ by $\tau(X)=(X,\flat_{\iota^*\psi+{\rm i}\iota^*\gamma})$.
This equality and the definition of $\tilde{\tau}$ yield $\tilde{\psi}=\iota^*\psi,\tilde{\gamma}=\iota^*\gamma$. In particular, $\iota^*\gamma$ is non degenerate because $\tilde{\gamma}$ is non degenerate.

Furthermore, we have
$$\tilde{\mathbf{F}}_\pm=\overleftarrow{\iota}^*\mathbf{H}_\pm \oplus\overleftarrow{\iota}^*\bar{\mathbf{H}}_\pm=\tau(T^cN\cap{S}_\pm)
\oplus\tau(T^cN\cap\bar{S}_\pm).$$
Then, since $dim\,\tilde{\mathbf{F}}_\pm=dim\,N$, we must have $T^cN=(TN^c\cap{S}_\pm)\oplus(T^cN\cap\bar{S}_\pm)$, whence $T^cN$ is invariant by $F$ and $\tilde{F}=F|_{T^cN}$.
\end{proof}
\begin{corol}\label{sbintegr} If $M,\mathcal{F},\mathcal{J})$ is a generalized para-K\"ahler manifold and $N$ is a regular invariant submanifold, $N$ with the induced structure is again a generalized para-K\"ahler manifold.\end{corol}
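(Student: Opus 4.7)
The plan is to verify integrability of the induced structure $(\mathcal{F}_N,\mathcal{J}_N)$ directly from the characterization given by Theorem \ref{th2integr}. That is, writing the induced triple as $(\gamma_N,\psi_N,F_N)=(\iota^*\gamma,\iota^*\psi,F|_{T^cN})$, one must check (i) involutivity of the $\pm1$-eigenbundles $S_{N\pm}$ of $F_N$ in $T^cN$, and (ii) the identity $d\psi_N(X,Y,Z)={\rm i}\,d\omega_N(F_NX,F_NY,F_NZ)$ for $X,Y,Z\in T^cN$, where $\omega_N(X,Y)=\gamma_N(X,F_NY)$.

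For (i), first observe that since $T^cN$ is $F$-invariant and $F_N^2=Id$, we have $T^cN=(T^cN\cap S_+)\oplus(T^cN\cap S_-)=S_{N+}\oplus S_{N-}$. If $X,Y$ are sections of $S_{N\pm}$, then they are tangent to $N$ and are sections of $S_\pm$ along $N$; since $S_\pm$ are involutive in $T^cM$ by hypothesis (Theorem \ref{th2integr} applied to $M$), the Lie bracket $[X,Y]$ lies in $S_\pm$, and since $N$ is a submanifold it is also tangent to $N$. Hence $[X,Y]\in T^cN\cap S_\pm=S_{N\pm}$, which gives the desired involutivity.

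For (ii), the two key facts are that pullback commutes with exterior differentiation and that the induced $2$-form $\omega_N$ coincides with $\iota^*\omega$: indeed, for $X,Y\in T^cN$, $F_NY=FY\in T^cN$, so $\omega_N(X,Y)=\gamma(X,FY)=\omega(X,Y)=(\iota^*\omega)(X,Y)$. Consequently $d\psi_N=\iota^*d\psi$ and $d\omega_N=\iota^*d\omega$. Applying the integrability identity of $M$ (Theorem \ref{th2integr}) to arguments $X,Y,Z\in T^cN\subseteq T^cM$ and using $FX=F_NX$, etc., yields condition (ii) immediately.

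There is essentially no obstacle; the argument is a routine pullback check once Theorem \ref{th2integr} is in hand. The only point that deserves care is the observation $S_{N\pm}=T^cN\cap S_\pm$, which relies precisely on the two hypotheses defining a regular invariant submanifold ($F$-invariance of $T^cN$ and non-degeneracy of $\iota^*\gamma$, the latter being needed to guarantee that the induced data actually defines a generalized almost para-Hermitian structure via Proposition \ref{sbvarind}). An alternative (and equally short) route would be to use Theorem \ref{thintegr}: since $\mathbf{H}_{N\pm}=\overleftarrow{\iota}^*\mathbf{H}_\pm$, the closure of $\mathbf{H}_{N\pm}$ under the Courant bracket follows from the known compatibility of Courant brackets with pullback along submanifold inclusions, but the route through Theorem \ref{th2integr} is more transparent.
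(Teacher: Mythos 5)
Your proof is correct, but it follows a different route from the paper's. The paper disposes of this corollary in two lines by invoking Theorem \ref{thintegr}: integrability of the structure on $M$ is equivalent to closure of $\mathbf{H}_\pm$ under the Courant bracket, the induced structure on $N$ has $\mathbf{H}_{N\pm}=\overleftarrow{\iota}^*\mathbf{H}_\pm$, and a result of Courant \cite{C} guarantees that pullbacks of Courant-involutive subbundles along a submanifold inclusion remain Courant-involutive. That is exactly the ``alternative route'' you mention in passing at the end. Your main argument instead verifies the tensorial criterion of Theorem \ref{th2integr}: involutivity of $S_{N\pm}=T^cN\cap S_\pm$ (which, as you say, follows from involutivity of $S_\pm$ and tangency of brackets of vector fields tangent to $N$ --- strictly one should extend sections of $S_{N\pm}$ to local sections of $S_\pm$ near $N$, or note that $N_{F_N}$ is the restriction of the tensor $N_F$, but this is routine) together with the identity $d\psi_N={\rm i}\,d\omega_N(F_N\cdot,F_N\cdot,F_N\cdot)$, which you correctly obtain from $\omega_N=\iota^*\omega$, $\psi_N=\iota^*\psi$, naturality of $d$ under pullback, and $F$-invariance of $T^cN$. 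What your approach buys is self-containedness: it avoids appealing to the external fact about Courant brackets and pullbacks, at the price of a slightly longer check; the paper's approach is shorter and stays entirely in the generalized-geometry formalism, but leans on the cited result from \cite{C}.
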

\begin{proof} The integrability of the structure of $M$ is equivalent with the fact that $\mathbf{H_\pm}$ are closed under the Courant bracket. The corresponding bundles of the induced structure are $\overleftarrow{\iota}^*\mathbf{H_\pm}$ and a result of \cite{C} tells that the pullback bundles are also closed under the Courant bracket.\end{proof}

In order to give another result, we recall a way to see the pullback $\overleftarrow{\iota}^*$ used in \cite{BS}. Consider the bundle $B_N=TN\oplus T^*M|_N\subseteq\mathbf{T}M$ and the projection
$$s:B_N\rightarrow B_N/(ann\,TN)\approx \mathbf{T}N.$$
Then, for any subbundle $U\subseteq \mathbf{T}^cM$, we get $\overleftarrow{\iota}^*U=s(U\cap B^c)$ at each point of $N$ (the field of subspaces $\overleftarrow{\iota}^*U$ may not be a smooth subbundle). Now, as in the generalized almost Hermitian case \cite{BS} we prove
\begin{prop}\label{FJinv} The submanifold $N$ is a regular invariant submanifold iff, $\forall x\in N$, the structures $\mathcal{F}_x,\mathcal{J}_x$ induce a generalized paracomplex, respectively complex, structure on $T_xN$ and $B=B\cap(\mathcal{F}B)\cap(\mathcal{J}B)\cap(\mathcal{H}B)+ann\,TN$.
\end{prop}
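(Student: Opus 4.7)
The plan is to use Proposition \ref{sbvarind} as the intermediary characterization of a regular invariant submanifold and to translate the condition involving $B$ into the statement that $\mathcal{F},\mathcal{J},\mathcal{H}$ all descend as operators through the projection $s:B\rightarrow B/(ann\,TN)\approx\mathbf{T}N$. The key technical fact is that for any endomorphism $\mathcal{A}\in End\,\mathbf{T}M$ with $\mathcal{A}^2=\pm Id$, the set $B\cap\mathcal{A}B$ consists of those elements $v\in B$ with $\mathcal{A}v\in B$, and the descended operator $\mathcal{A}_N$ on $B/(ann\,TN)$ is well-defined precisely when $B=(B\cap\mathcal{A}B)+ann\,TN$.

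For the forward direction, I would start from $N$ regular invariant and apply Proposition \ref{sbvarind} to obtain the decomposition (\ref{descsbvar}) of $\mathbf{T}^cN$. The induced structure $(\mathcal{F}_N,\mathcal{J}_N)$ then produces, pointwise, a generalized paracomplex and a generalized complex structure on $T_xN$. To check the equation on $B$, I would use the matrix representations (\ref{matrici}): an element $(X,\alpha)\in B$ lies in $\mathcal{F}B$ iff $PX+\sharp_\phi\alpha\in T_xN$, and similarly for $\mathcal{J},\mathcal{H}$. Using the invariance of $T^cN$ under $F$ together with non-degeneracy of $\iota^*\gamma$, the formulas (\ref{Qetc}) let one adjust $\alpha$ by an element of $ann\,T_xN$ so that all three conditions hold simultaneously, giving the claimed equality.

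For the converse, the $B$-equation ensures that $\mathcal{F},\mathcal{J},\mathcal{H}$ descend to operators $\mathcal{F}_N,\mathcal{J}_N,\mathcal{H}_N$ on $\mathbf{T}N$, automatically $g_N$-skew-symmetric and satisfying $\mathcal{F}_N^2=Id$, $\mathcal{J}_N^2=-Id$, $\mathcal{H}_N=\mathcal{F}_N\mathcal{J}_N$. The hypothesis that $\mathcal{F}_x,\mathcal{J}_x$ induce a generalized paracomplex resp.\ complex structure on $T_xN$ means that the $\pm1$-eigenbundles of $\mathcal{F}_N$ have equal rank and the $\pm\mathrm{i}$-eigenbundles of $\mathcal{J}_N$ are maximally $g_N$-isotropic. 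Then $\mathcal{H}_N$ is a $g_N$-symmetric big almost complex structure on $\mathbf{T}N$, and Proposition \ref{gammanondeg} applied on $N$ shows that its $\mathrm{i}$-eigenbundle $\mathbf{H}_N$ satisfies $\mathbf{H}_N\cap T^cN=0$ iff the induced $\gamma_N$ is non-degenerate; the decomposition of $\mathbf{T}^cN$ produced via Proposition \ref{propdedesc} then coincides with the pullback decomposition (\ref{descsbvar}), so Proposition \ref{sbvarind} yields that $N$ is regular invariant.

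The main obstacle is bookkeeping: one must verify that the operator one obtains by descending $\mathcal{F}$ through $s$ is literally the operator $\mathcal{F}_N$ built from the triple $(\iota^*\gamma,\iota^*\psi,F|_{T^cN})$, and that the non-degeneracy of $\iota^*\gamma$ is encoded by the $\mathcal{H}B$ factor (which is the reason $\mathcal{H}B$ cannot be dropped, even though $\mathcal{H}=\mathcal{F}\mathcal{J}$). This is essentially the translation between the matrix picture (\ref{matrici}) restricted to $B$ and the quotient construction via $s$, and once carried out it gives both implications with the same computation used in the generalized almost Hermitian case of \cite{BS}.
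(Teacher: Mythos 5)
Your overall plan --- funnel everything through Proposition \ref{sbvarind} and read the condition $B=B\cap(\mathcal{F}B)\cap(\mathcal{J}B)\cap(\mathcal{H}B)+ann\,TN$ as the statement that the structure operators interact well with the quotient $s:B\to B/(ann\,TN)\approx\mathbf{T}N$ --- is the same as the paper's, but the one identity that actually carries the proof is absent from your outline. Writing $C=B\cap(\mathcal{F}B)\cap(\mathcal{J}B)\cap(\mathcal{H}B)$, the paper observes that a real $\mathcal{X}$ lies in $C$ iff the four projections $pr_{\mathbf{H}_\pm}\mathcal{X}=\frac{1}{4}(Id\pm\mathcal{F})(Id-{\rm i}\mathcal{H})\mathcal{X}$ and their conjugates lie in $B^c$; hence $C^c=(B^c\cap\mathbf{H}_+)\oplus(B^c\cap\mathbf{H}_-)\oplus(B^c\cap\bar{\mathbf{H}}_+)\oplus(B^c\cap\bar{\mathbf{H}}_-)$ and $s(C^c)=\overleftarrow{\iota}^*\mathbf{H}_+\oplus\overleftarrow{\iota}^*\mathbf{H}_-\oplus\overleftarrow{\iota}^*\bar{\mathbf{H}}_+\oplus\overleftarrow{\iota}^*\bar{\mathbf{H}}_-$. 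With this in hand both implications drop out of Proposition \ref{sbvarind} with no computation: if $N$ is regular invariant the right-hand side is $\mathbf{T}^cN=s(B^c)$, so $B^c=C^c+ann\,T^cN$; conversely $B=C+ann\,TN$ gives $s(C^c)=\mathbf{T}^cN$, and the hypothesis on $\mathcal{F}_x,\mathcal{J}_x$ supplies the maximal isotropy of the three required sums, so (\ref{descsbvar}) holds.

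In place of this identity, your forward direction proposes to ``adjust $\alpha$ by an element of $ann\,T_xN$'' using (\ref{Qetc}) so that the $\mathcal{F}$-, $\mathcal{J}$- and $\mathcal{H}$-conditions hold simultaneously. That is exactly the assertion $B=C+ann\,TN$ restated, and it is not a routine adjustment: $ann\,TN$ is in general not contained in $C$ (e.g.\ $\mathcal{F}(0,\nu)=(\sharp_\phi\nu,-P^*\nu)$ need not lie in $B$), so one must solve the three simultaneous membership conditions $PX+\sharp_\phi\alpha\in TN$, $AX+\sharp_\pi\alpha\in TN$, $QX+\sharp_\gamma\alpha\in TN$ with a single perturbation $\alpha\mapsto\alpha-\nu$, $\nu\in ann\,TN$; you assert feasibility but give no argument. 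Similarly, your converse hinges on identifying the eigenbundles of the descended operators $\mathcal{F}_N,\mathcal{J}_N,\mathcal{H}_N$ with the pullbacks $\overleftarrow{\iota}^*\mathbf{H}_\pm$, which you defer as ``bookkeeping''; that identification is again precisely the displayed description of $s(C^c)$. Two smaller points: well-definedness of a descended operator requires, besides $B=(B\cap\mathcal{A}B)+ann\,TN$, that $\mathcal{A}$ send the ambiguity $ann\,TN\cap\mathcal{A}^{-1}B$ into $ann\,TN$ --- true here by $g$-(skew-)symmetry since $ann\,TN=B^{\perp_g}$, but it must be said; and your remark that the $\mathcal{H}B$ factor ``encodes non-degeneracy of $\iota^*\gamma$'' is misleading, since the paper shows that $B=B\cap(\mathcal{H}B)+ann\,TN$ holds for \emph{every} submanifold. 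So the skeleton is right, but the proof's actual content --- the projection formula and the resulting computation-free evaluation of $s(C^c)$ --- still has to be supplied.
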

\begin{proof} For any submanifold $N$ of $M$, let us denote $C=B\cap(\mathcal{F}B)\cap(\mathcal{J}B)\cap(\mathcal{H}B)$, equivalently, $\mathcal{X}\in\mathbf{T}M$ belongs to $C$ iff $\mathcal{X},\mathcal{F}\mathcal{X},\mathcal{J}\mathcal{X},\mathcal{H}\mathcal{X}\in B$. Since $\mathbf{H}_\pm=\mathbf{F}_\pm\cap\mathbf{H}$, $\mathcal{X}\in\mathbf{T}^cM$ has the projections
$$ pr_{\mathbf{H}_\pm}\mathcal{X}=\frac{1}{4}(Id\pm\mathcal{F})(Id-{\rm i}\mathcal{H})\mathcal{X}$$
and we see that $\mathcal{X}\in C$ ($\mathcal{X}$ is real) iff $pr_{\mathbf{H}_\pm}\mathcal{X}\in B^c$. This result implies the equality
$$C^c=(B^c\cap\mathbf{H}_+)\oplus(B^c\cap\mathbf{H}_-)\oplus (B^c\cap\bar{\mathbf{H}}_+)\oplus(B^c\cap\bar{\mathbf{H}}_-),$$
hence,
\begin{equation}\label{auxJFinv}
s(C^c)=\overleftarrow{\iota}^*\mathbf{H}_+\oplus \overleftarrow{\iota}^*\mathbf{H}_-\oplus
\overleftarrow{\iota}^*\bar{\mathbf{H}}_+ \oplus\overleftarrow{\iota}^*\bar{\mathbf{H}}_-.
\end{equation}

If $N$ is a regular invariant submanifold, Proposition \ref{sbvarind} tells us that the right hand side of (\ref{auxJFinv}) is $\mathbf{T}^cN$ and $B=C+ann\,TN$. On the other hand, we already know that a regular invariant submanifold has structures induced by $\mathcal{F},\mathcal{J}$.

Conversely, if $B=C+ann\,TN$, $s(C^c)=\mathbf{T}^cN$ and (\ref{auxJFinv}) is a decomposition of the form (\ref{descsbvar}). Moreover, if structures induced by $\mathcal{F},\mathcal{J}$ exist,
$\overleftarrow{\iota}^*\mathbf{H}_\pm\oplus
\overleftarrow{\iota}^*\bar{\mathbf{H}}_\pm, \overleftarrow{\iota}^*\mathbf{H}_+\oplus
\overleftarrow{\iota}^*\bar{\mathbf{H}}_-$ are the maximal isotropic subbundles that define the induced structures. Accordingly, (\ref{auxJFinv}) defines an induced, generalized almost para-Hermitian structure on $N$.
\end{proof}
\begin{corol}\label{totalinv} Any submanifold $\iota:N\subseteq(M,\mathcal{F},\mathcal{J})$ such that $\mathcal{F}B=B$ is a regular invariant submanifold.\end{corol}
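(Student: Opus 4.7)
The plan is to deduce the conclusion from Proposition~4.2, so I need to verify its two hypotheses under the single assumption $\mathcal{F}B=B$: (a) the pointwise induction of $\mathcal{F}_x,\mathcal{J}_x$ to generalized (para)complex structures on $T_xN$, and (b) the identity $B=B\cap(\mathcal{F}B)\cap(\mathcal{J}B)\cap(\mathcal{H}B)+\text{ann}\,TN$.

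I would first observe that $B^\perp=\text{ann}\,TN$ (sitting as $\{0\}\oplus\text{ann}\,TN\subseteq B$). This is immediate from the defining pairing $g((X,\alpha),(Y,\beta))=\tfrac12(\alpha(Y)+\beta(X))$: varying $(0,\beta)$ with $\beta\in T^*M|_N$ kills the vector part of anything in $B^\perp$, and the remaining condition is exactly $\alpha|_{TN}=0$. Because $\mathcal{F}$ is $g$-skew-symmetric, the hypothesis $\mathcal{F}B=B$ yields $\mathcal{F}(B^\perp)=B^\perp$, so $\mathcal{F}$ descends to a $g_N$-skew-symmetric involution $\mathcal{F}_N$ on $\mathbf{T}N=B/B^\perp$, i.e.\ to a generalized almost paracomplex structure on $N$. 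This handles the $\mathcal{F}$ half of (a).

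For the $\mathcal{J}$ half of (a) and for (b), I would rephrase the hypothesis via the decomposition $\mathbf{T}^cM=\mathbf{H}_+\oplus\mathbf{H}_-\oplus\bar{\mathbf{H}}_+\oplus\bar{\mathbf{H}}_-$ of Proposition~2.4: since $\mathbf{F}_\pm=\mathbf{H}_\pm\oplus\bar{\mathbf{H}}_\pm$, $\mathcal{F}B=B$ is equivalent to $B^c=(B^c\cap\mathbf{F}_+)\oplus(B^c\cap\mathbf{F}_-)$. The commutation relations $\mathcal{J}\mathcal{F}=\mathcal{F}\mathcal{J}$ and $\mathcal{H}=\mathcal{F}\mathcal{J}$ imply that $\mathcal{J}$ and $\mathcal{H}$ each preserve the splitting by $\mathbf{F}_\pm$; combined with the fact that $\mathcal{J}$ is $g$-skew and $\mathcal{H}$ is $g$-symmetric (so each sends $B^\perp$ into $B^\perp$), I would then argue that $\mathcal{J}$ and $\mathcal{H}$ descend to $\mathbf{T}N$, giving the required $\mathcal{J}_N$ and confirming that the intersection $C=B\cap(\mathcal{F}B)\cap(\mathcal{J}B)\cap(\mathcal{H}B)$ projects onto all of $\mathbf{T}N$ under $s:B\to \mathbf{T}N$, which is exactly condition~(b). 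An application of Proposition~4.2 then finishes the proof.

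The main obstacle I anticipate is the last step: refining the $\mathcal{F}$-invariance of $B^c$ into the full four-term decomposition required by Proposition~4.1 for the induced structure on $N$. Concretely, for $v\in B^c\cap\mathbf{F}_+$ one must show that its $\mathbf{H}_+$- and $\bar{\mathbf{H}}_+$-components individually lie in $B^c$ modulo $\text{ann}\,TN$, and that the analogous refinement of $\mathbf{F}_-$ is compatible with $\mathbf{J}=\mathbf{H}_+\oplus\bar{\mathbf{H}}_-$ being maximally $g$-isotropic. I would extract this by using that $\mathbf{H}_\pm=\tfrac12(Id\mp\mathrm{i}\mathcal{H})\mathbf{F}_\pm$ and that $\mathcal{H}$ both commutes with $\mathcal{F}$ and is $g$-symmetric; the maximal $g$-isotropy of $\mathbf{F}_\pm$ combined with the fact that $B^\perp$ is itself $g$-isotropic and $\mathcal{H}$-stable should then force the decomposition on the quotient to be clean, which is precisely what is needed to invoke Proposition~4.2 and conclude.
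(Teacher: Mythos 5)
Your first step is correct and is in fact tidier than the paper's own treatment of the $\mathcal{F}$-half: indeed $B^\perp=\{0\}\oplus ann\,TN$, and $g$-skewness of $\mathcal{F}$ together with $\mathcal{F}B=B$ gives $\mathcal{F}(B^\perp)=(\mathcal{F}B)^\perp=B^\perp$, so $\mathcal{F}$ descends to a generalized almost paracomplex structure on $\mathbf{T}N=B/B^\perp$. (The paper instead verifies the explicit criterion $TN\cap\sharp_\phi(ann\,TN)=0$, $P(TN)\subseteq TN+im\,\sharp_\phi$ from its reference on induced paracomplex structures.)

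The second half has a genuine gap. The parenthetical claim that $\mathcal{J}$ being $g$-skew and $\mathcal{H}$ being $g$-symmetric makes each of them send $B^\perp$ into $B^\perp$ is false: skewness/symmetry only yields $\mathcal{J}(B^\perp)=(\mathcal{J}B)^\perp$ and $\mathcal{H}(B^\perp)=(\mathcal{H}B)^\perp$, which coincide with $B^\perp$ only when $\mathcal{J}B=B$, respectively $\mathcal{H}B=B$ --- and neither holds for a proper submanifold. Concretely, $\mathcal{H}(0,\alpha)=(\sharp_\gamma\alpha,Q^*\alpha)$, and for $0\neq\alpha\in ann\,TN$ the vector part $\sharp_\gamma\alpha$ in general does not even lie in $TN$; so $\mathcal{H}(B^\perp)\not\subseteq B$, and likewise for $\mathcal{J}=\mathcal{F}\circ\mathcal{H}$. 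Hence $\mathcal{J}$ and $\mathcal{H}$ do not ``descend to $\mathbf{T}N$'' in your sense, the assertion in your last paragraph that $B^\perp$ is $\mathcal{H}$-stable is also false, and your route to condition (b) and to $\mathcal{J}_N$ collapses at exactly the point you flagged as the main obstacle. What the paper uses instead is the weaker statement $B=B\cap(\mathcal{H}B)+ann\,TN$, proved not from any symmetry of $\mathcal{H}$ but from the fact that $\mathbf{H},\bar{\mathbf{H}}$ are the graphs of $\flat_{\psi\pm{\rm i}\gamma}$, so that $s((B\cap\mathcal{H}B)^c)=\overleftarrow{\iota}^*\mathbf{H}\oplus\overleftarrow{\iota}^*\bar{\mathbf{H}}=\mathbf{T}^cN=s(B^c)$. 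Only then does the hypothesis enter, via $\mathcal{H}B=\mathcal{J}(\mathcal{F}B)=\mathcal{J}B$, which converts the identity into $B=B\cap(\mathcal{J}B)+ann\,TN$ (the Barton--Sti\'enon criterion for $\mathcal{J}$ to induce a structure) and simultaneously gives $B\cap(\mathcal{F}B)\cap(\mathcal{J}B)\cap(\mathcal{H}B)=B\cap(\mathcal{J}B)$, so that Proposition \ref{FJinv} applies. Some version of this graph/pullback computation is indispensable; the commutation relations and the $g$-symmetry properties of $\mathcal{J},\mathcal{H}$ alone will not produce it.
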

\begin{proof} A submanifold with the indicated property will be called {\it strongly $\mathcal{F}$-invariant} and we will see that $\mathcal{F}$ induces a generalized almost para-complex structure on $N$. Indeed, the necessary and sufficient condition for that are \cite{IV}
\begin{equation}\label{auxIV}
TN\cap\sharp_\phi(ann\,TN)=0,\; P(TN)\subseteq TN+im\,\sharp_\phi,
\end{equation}
where $P,\phi$ are entries of the matrix (\ref{matrici}) of $\mathcal{F}$. Since strong invariance is equivalent to $P(TN)\subseteq TN,im\,\sharp_\phi\subseteq TN$ the second condition (\ref{auxIV}) obviously holds. On the other hand, we have $$<\nu,\sharp_\phi\lambda>=
-<\lambda,\sharp_\phi\nu>=0,\,\forall\nu\in ann\,TN,\lambda\in T^*M|_N,$$
whence $\sharp_\phi(ann\,TN)=0$ and the first condition (\ref{auxIV}) holds too.

Furthermore, we prove that every submanifold satisfies the property $B=B\cap(\mathcal{H}B)+ann\,TN$. Indeed, by looking at the projections $pr_{\mathbf{H}}\mathcal{X}=
(1/2)(\mathcal{X}-{\rm i}\mathcal{H}\mathcal{X}), pr_{\bar{\mathbf{H}}}\mathcal{X}=
(1/2)(\mathcal{X}+{\rm i}\mathcal{H}\mathcal{X})$, $\mathcal{X}\in B\cap(\mathcal{H}B)$, we see that
$$s(B\cap(\mathcal{H}B))^c=s(\mathbf{H}\cap B^c)\oplus s(\bar{\mathbf{H}}\cap B^c)=
\overleftarrow{\iota}^*\mathbf{H}\oplus\overleftarrow{\iota}^* \bar{\mathbf{H}}=\mathbf{T}^cN=
s(B^c).$$

Then, from $\mathcal{F}B=B$, we deduce
\begin{equation}\label{BcapHB} \begin{array}{l}
B=B\cap(\mathcal{H}B)+ann\,TN=B\cap(\mathcal{J}(\mathcal{F}B))+ann\,TN\vspace*{2mm}\\ \hspace*{3mm}=B\cap(\mathcal{J}B)+ann\,TN,\end{array}
\end{equation} 
which characterizes the fact that the tangent spaces $T_xN$, $x\in N$ have a generalized almost complex structure induced by $\mathcal{J}$ (this was proven in \cite{BS}). On the other hand, we also have $B\cap(\mathcal{F}B)\cap(\mathcal{J}B)\cap(\mathcal{H}B)=B\cap(\mathcal{J}B)$. Together with (\ref{BcapHB}), this shows that the hypotheses of Proposition \ref{FJinv} hold.\end{proof}

The second subject of this section is reduction. Let $(M,\mathcal{F},\mathcal{J})$ be a generalized almost para-Hermitian manifold with the corresponding triple $(\gamma,\psi,F)$. Assume that a Lie group $G$ acts on $M$ such that the structure is preserved. The last assertion has the following meaning: if we see $g\in G$ as a diffeomorphism of $M$ and define its action on $\mathbf{T}M$ by
$$g_\bigstar(X,\alpha)=(g_*X,g^{-1*}\alpha),$$ then, $\mathcal{F}$ and $\mathcal{J}$ commute with $g_\bigstar$. This is easily seen to be equivalent with the fact that $g_\bigstar$ preserves the subbundles $\mathbf{H}_\pm$, which, further, is equivalent with the preservation of the triple $(\gamma,\psi,F)$ (preservation of $F$ means commutation with the differentials $g_*$).

Reduction is a process originating in symplectic geometry and, in the present case, we shall define it with reference to the complex $2$-form $\omega(X,Y)=\gamma(X,FY)$ as in the similar, para-Hermitian case \cite{VJMP}. A submanifold $\iota:N\subseteq M$ that is invariant by a subgroup $G'\subseteq G$ is said to be a {\it reducing submanifold} if (i) the action of $G'$ on $N$ is free and proper, (ii) $\forall x\in N$, the orbit $G(x)$ cleanly intersects $N$ and $T^cG(x)=(T^cN)^{\perp_\omega}$, (iii) the submanifold $G(x)\cap N$ is equal to the orbit $G'(x)$.
\begin{prop}\label{redcuG} Let $N$ be a reducing submanifold of the generalized almost para-Hermitian manifold $(M,\gamma,\psi,F)$. Assume that the metric $\gamma$ is non degenerate on $N$ and on the orbits $G'(x)$ and that $ann\,\psi_x\supseteq T_xG'(x)$ ($x\in N$). Then, the quotient manifold $Q=N/G'$  has a generalized almost para-Hermitian structure defined by the structure of $M$.\end{prop}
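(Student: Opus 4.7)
The plan is to construct a triple $(\gamma_Q,\psi_Q,F_Q)$ on $Q$ satisfying the hypotheses of Proposition~\ref{defstrcuF} and thereby produce the desired generalized almost para-Hermitian structure. Freeness and properness of the $G'$-action turn $\pi:N\to Q=N/G'$ into a principal $G'$-bundle. Since $\gamma$ is non-degenerate on each orbit $T_xG'(x)$, the $\iota^*\gamma$-orthogonal decomposition
$$T_xN = T_xG'(x)\oplus\mathcal{H}_x$$
defines a $G'$-invariant horizontal distribution $\mathcal{H}$ on $N$ with $\pi_*:\mathcal{H}_x\to T_{[x]}Q$ a linear isomorphism. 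Horizontal lifts then yield a metric $\gamma_Q$ on $Q$: it is well-defined by $G'$-invariance of $\gamma$, and non-degenerate because $\mathcal{H}_x$ is the $\gamma$-orthogonal complement of a non-degenerate subspace inside the non-degenerate $T_xN$.

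For $\psi_Q$, the hypothesis $ann\,\psi_x\supseteq T_xG'(x)$ gives $i_X\iota^*\psi=0$ for $X\in T_xG'(x)$, and $G$-invariance of $\psi$ gives $L_X\iota^*\psi=0$; Cartan's formula then shows $\iota^*\psi$ is basic, so it descends to a unique $\psi_Q\in\Omega^2(Q)$. To produce $F_Q$ I descend instead the complex $2$-form $\omega(X,Y)=\gamma(X,FY)$. Condition (ii) of the reducing submanifold, $T^cG(x)=(T^cN)^{\perp_\omega}$, yields $\omega(X,Y)=0$ whenever $X\in T^cG'(x)\subseteq T^cG(x)$ and $Y\in T^cN$, so $\iota^*\omega$ is horizontal; combined with the $G$-invariance of $\omega$ inherited from that of $\gamma$ and $F$, it is basic and descends to a complex $2$-form $\omega_Q$ on $Q$. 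Non-degeneracy of $\gamma_Q$ then determines $F_Q\in End\,T^cQ$ uniquely by $\omega_Q(X,Y)=\gamma_Q(X,F_QY)$, and antisymmetry of $\omega_Q$ immediately gives $\gamma_Q(F_QX,Y)=-\gamma_Q(X,F_QY)$.

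The delicate step, and the main obstacle, is to verify $F_Q^2=Id$; this is nontrivial because $F$ need not preserve $T^cN$, so $F_Q$ is not simply $F|_{T^cN}$ pushed down. I would reformulate $F^2=Id$ on $M$ as the purely algebraic identity $\omega\circ\gamma^{-1}\circ\omega=\gamma$ between $\gamma$ and $\omega$, and check the analogous identity on $Q$ pointwise on horizontal lifts. The key ingredients are: the derived identity $F((T^cN)^{\perp_\gamma})=T^cG(x)$, obtained by applying $F$ to condition (ii) using $\omega=\gamma(\cdot,F\cdot)$ and $F^2=Id$; the disjointness $\mathcal{H}_x\cap T^cG(x)=0$, which follows from $\mathcal{H}_x\subseteq T^cN$, the clean-intersection equality $T^cN\cap T^cG(x)=T^cG'(x)$ given by (iii), and $\mathcal{H}_x\cap T^cG'(x)=0$ by definition of $\mathcal{H}_x$; and the fact that both $T^cG'(x)$ and $(T^cN)^{\perp_\gamma}$ lie in the $\gamma$-annihilator of $\mathcal{H}_x$. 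Together these force the spurious $T^cG'(x)$- and $(T^cN)^{\perp_\gamma}$-components that arise when $F$ is iterated on a horizontal lift to pair trivially with horizontal test vectors under $\gamma$, so the identity descends and gives $F_Q^2=Id$ on $T^cQ$. Proposition~\ref{defstrcuF} then furnishes the generalized almost para-Hermitian structure $(\mathcal{F}_Q,\mathcal{J}_Q)$ on $Q$.
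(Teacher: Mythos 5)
Your construction coincides with the paper's proof in every step but the last: the paper also reduces $\gamma$ by restricting to $V_x=(T_x\mathcal{K})^{\perp_{\iota^*\gamma}}\subseteq T_xN$ ($\mathcal{K}$ the union of the $G'$-orbits), descends $\iota^*\psi$ and $\iota^*\omega$ as basic forms using exactly the hypotheses you cite, and then defines $F'=-\sharp_{\iota^*\gamma}\circ\flat_{\iota^*\omega}$, i.e.\ recovers $F'$ from $\omega'(X,Y)=\gamma'(X,F'Y)$ just as you do. Where you part company is the verification of $F_Q^2=Id$. The paper asserts that condition (ii) together with the $G$-invariance of $F$ forces $F$ to preserve $T^cN$ and $T^c\mathcal{K}$ along $N$, so that $F'$ is literally induced by $F$ on $V^c$ and $F'^2=Id$ is inherited from $F^2=Id$; you explicitly reject this invariance and replace it by a cancellation argument, and that argument has a genuine gap.

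Concretely, with $W_x=V_x^{\perp_\gamma}=(T^c_xN)^{\perp_\gamma}\oplus T^c_xG'(x)$ one has $F_QY=pr_{V}(FY)$ (projection along $W$), hence $F_Q^2Y=Y-pr_V(Fw)$ where $w=pr_W(FY)=u+v$, $u\in(T^cN)^{\perp_\gamma}$, $v\in T^cG'(x)$. Your ingredients do dispose of the $v$-term, since $Fv\in F(T^cG(x))=(T^cN)^{\perp_\gamma}$ is $\gamma$-orthogonal to $V\subseteq T^cN$. But for the $u$-term and a horizontal test vector $h\in V$ one gets $\gamma(h,Fu)=-\gamma(Fh,u)$, which vanishes for all $u\in(T^cN)^{\perp_\gamma}$ only when $Fh\in T^cN$; equivalently, your three ingredients reduce the claim to $T^cG(x)\subseteq W$, i.e.\ to $T^cG(x)\perp_\gamma V$, i.e.\ to $F(V)\subseteq T^cN$ --- which is precisely (a form of) the invariance $F(T^cN)\subseteq T^cN$ that you set out to avoid and that none of your listed facts delivers. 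So the ``spurious components pair trivially'' step does not close: you must either establish, as the paper does, that $F$ preserves $T^cN$ and the orbit directions, or add that invariance as an explicit ingredient; without it the argument for $F_Q^2=Id$ fails.
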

\begin{proof}
Since the intersection $G(x)\cap N$ is clean,  if we denote $K=ann(\iota^*\omega_x)$, we get
\begin{equation}\label{aux1red}
T^c(G'(x))=T^cG(x)\cap T^cN=(T^cN)^{\perp_\omega}\cap T^cN=K\;\;(\forall x\in N)
\end{equation}
and $rank(\iota^*\omega)=const.$ We will denote by $\mathcal{K}$ the manifold given by the sum of the orbits of $G'$.
Condition (i) ensures the existence of the quotient manifold $Q$ and of the natural submersion $p:N\rightarrow Q$ that sends $x$ to the orbit $G'(x)$. By (\ref{aux1red}), we have
$i(X)(\iota^*\omega_x)=0$, $\forall X\in T^c(G'(x))$. Furthermore, any local vector field $X\in T\mathcal{K}$ is a finite linear combination of infinitesimal transformations given by elements $\xi$ of the Lie algebra $\mathfrak{g}'$ of $G'$ and, since $G'$ preserves $\omega$ and $N$, we must have $L_X(\iota^*\omega)=0$. The three facts: $p$ submersion, $i(X)(\iota^*\omega)=0,L_X(\iota^*\omega)=0$ ensure the existence of a non degenerate complex $2$-form $\omega'$ on $Q$ such that $p^*\omega'=\iota^*\omega$ and $\omega'$ is called the {\it reduction} of $\omega$.

Then, the hypotheses on $\gamma$ imply the existence of the subbundle $V=(T\mathcal{K})^{\perp_{\iota^*\gamma}}$ with the non degenerate metric $\iota^*\gamma|_V$, which is invariant by the infinitesimal transformations of $\mathfrak{g}'$. Accordingly, we must have $\iota^*\gamma|_V=p^*\gamma'$ where $\gamma'$ is a non degenerate metric of $Q$. Furthermore, by (ii), for $x\in N$, we have $T^cN=(T^cG(x))^{\perp_\omega}$ and, since $F$ is compatible with $\omega$ and commutes with the infinitesimal transformation of $G$, we see that $F|_N$ preserves $T^cN$. Moreover, along $N$, $F$ preserves the bundle $T\mathcal{K}$, therefore, it induces the endomorphism $F'=-\sharp_{\iota^*\gamma|_N}\circ \flat_{\iota^*\omega|_N}$ of $V^c$. We may see $F'$ as an endomorphism of $T^cQ$ that satisfies (\ref{Fgamma}). Finally, the hypotheses on $\psi$ ensure that $i(X)\iota^*\psi=0, L_X\iota^*\psi=0$, $\forall X\in TN$ (same explanation as for $\omega$), whence, the existence of a $2$-form $\psi'$ of $Q$ such that $\iota^*\psi|_V=p^*\psi'$. The triple $(\gamma',\psi',F')$ defines the required structure of $Q$, also called the {\it reduction} of the original structure of $M$ {\it via the reducing submanifold} $N$.\end{proof}
\begin{prop}\label{integrred} If $M$ is a generalized para-K\"ahler manifold and $N$ is a reducing submanifold that satisfies the hypotheses of Proposition \ref{redcuG}, the corresponding reduced manifold $Q$ is also a generalized para-K\"ahler manifold.\end{prop}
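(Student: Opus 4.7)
The plan is to verify the two integrability criteria of Theorem~\ref{th2integr} for the reduced triple $(\gamma',\psi',F')$ on $Q$: (a) the $\pm 1$-eigenbundles $S'_\pm$ of $F'$ are involutive, and (b) $d\psi'(X',Y',Z')={\rm i}\,d\omega'(F'X',F'Y',F'Z')$ for all $X',Y',Z'\in T^cQ$. Both conditions will be transported from $M$ through the submersion $p:N\to Q$ using the basic-form identities $\iota^*\psi=p^*\psi'$, $\iota^*\omega=p^*\omega'$, $\iota^*\gamma|_V=p^*\gamma'$ established in Proposition~\ref{redcuG}, together with the compatibility $p_*\circ F|_{V^c}=F'\circ p_*$ that follows from the formula for $F'$ given there.

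The preliminary observation, on which everything else rests, is that the decomposition $T^cN=V^c\oplus T^c\mathcal{K}|_N$ is $F$-stable. Indeed, $F$ preserves $T^cN$ and $T^c\mathcal{K}|_N$ (both established in the proof of Proposition~\ref{redcuG}), and for $X\in V^c$, $Y\in T^c\mathcal{K}|_N$ one has $\gamma(FX,Y)=-\gamma(X,FY)=0$, so $V^c$ is $F$-invariant too. Hence
$$T^cN\cap S_\pm=(V^c\cap S_\pm)\oplus(T^c\mathcal{K}|_N\cap S_\pm).$$
For (a), given a section $X'$ of $S'_\pm$, its horizontal lift $X\in\Gamma(V^c)$ satisfies $p_*FX=F'X'=\pm X'=\pm p_*X$, and since $p_*|_{V^c}$ is an isomorphism, $FX=\pm X$, i.e.\ $X\in V^c\cap S_\pm$; $G'$-invariance of $V$ and of $F$ makes $X$ basic. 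For two such basic lifts of $S'_\pm$-sections, involutivity of $S_\pm$ on $M$ (applied to local $S_\pm$-extensions) yields $[X,Y]\in T^cN\cap S_\pm$; the $F$-stable splitting above places the $V$-component of $[X,Y]$ in $V^c\cap S_\pm$, and $p_*[X,Y]=[X',Y']$ therefore belongs to $S'_\pm$.

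For (b), since $d$ commutes with $\iota^*$ and with $p^*$, the basic-form identities give $p^*(d\psi')=\iota^*(d\psi)$ and $p^*(d\omega')=\iota^*(d\omega)$. Taking horizontal lifts $X,Y,Z\in V^c$ of arbitrary $X',Y',Z'\in T^cQ$ and evaluating the integrability identity $d\psi={\rm i}\,d\omega(F\cdot,F\cdot,F\cdot)$ of $M$ on them, the $F$-invariance of $V^c$ and the compatibility $p_*F|_{V^c}=F'p_*$ convert the right-hand side into ${\rm i}\,d\omega'(F'X',F'Y',F'Z')$ and the left-hand side into $d\psi'(X',Y',Z')$. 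Since horizontal lifts span $T^cQ$ pointwise via $p_*$, (b) follows on all of $T^cQ$.

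The main obstacle I anticipate is precisely the $F$-stability of the decomposition $T^cN=V^c\oplus T^c\mathcal{K}|_N$: every subsequent step — the existence of an $S_\pm$-valued horizontal lift, the control of the $V$-component of a Lie bracket, and the projectability of the two differential identities — depends on it. Once this piece of linear algebra is in place, the remaining arguments are essentially routine pull-back/push-forward manipulations, and Theorem~\ref{th2integr} delivers integrability of $(\gamma',\psi',F')$ on $Q$.
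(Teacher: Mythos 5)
Your proposal is correct and follows essentially the same route as the paper: both verify the two conditions of Theorem \ref{th2integr} by transporting them through the submersion $p:N\to Q$, the paper phrasing involutivity as $N_{F'}(p_*X_1,p_*X_2)=p_*N_F(X_1,X_2)=0$ for projectable fields and identifying (\ref{necint}) on $Q$ with its pullback by $p^*$ to $N$. Your explicit check that $V^c=(T\mathcal{K})^{\perp_{\iota^*\gamma}}$ is $F$-stable (via the $\gamma$-skew-symmetry of $F$), which underlies the compatibility $p_*\circ F=F'\circ p_*$, is a detail the paper leaves implicit but does not change the argument.
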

\begin{proof} By Theorem \ref{th2integr}, the integrability conditions of the structure of $M$ are given by $N_F=0$ together with the equality (\ref{necint}). The Lie bracket on $Q$ is given by
$[p_*X_1,p_*X_2]=p_*[X_1,X_2]$, where $X_1,X_2$ are vector field on $N$ that admit a projection on $Q$. Thus,for the same $X_1,X_2$, the definition of the reduction $F'$ gives
$$N_{F'}(p_*X_1,p_*X_2)=p_*N_F(X_1,X_2)=0.$$
Similarly, equality (\ref{necint}) on $Q$ may be identified with its pullback by $p^*$ to $N$, hence it is implied by (\ref{necint}) on $M$.
\end{proof}
\begin{prop}\label{redcuH} Under the hypotheses of Proposition \ref{redcuG}, the subbundles $\mathbf{H}'_\pm$ of the reduced manifold $Q$ are given by $\mathbf{H}'_\pm=\overrightarrow{p}_*\overleftarrow{\iota}^*\mathbf{H}_\pm$, where $\mathbf{H}_\pm$ define the generalized almost para-Hermitian structure of $M$.\end{prop}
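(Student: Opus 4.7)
The plan is to match both sides by computing them explicitly from formula (\ref{tau}), $\mathbf{H}_\pm=\{(X,\flat_{\psi+{\rm i}\gamma}X):X\in S_\pm\}$. Unwinding (\ref{defback}), a pair $(X,\beta)\in\mathbf{T}^cN$ lies in $\overleftarrow{\iota}^*\mathbf{H}_\pm$ iff $\iota_*X\in S_\pm$ and $\beta=\iota^*\flat_{\psi+{\rm i}\gamma}(\iota_*X)$. Since the proof of Proposition \ref{redcuG} established that $F|_N$ preserves $T^cN$ on a reducing submanifold, this reduces to
\[
\overleftarrow{\iota}^*\mathbf{H}_\pm=\{(X,\flat_{\psi_N+{\rm i}\gamma_N}X):X\in T^cN\cap S_\pm\}.
\]
Applying the pushforward, $(X',\xi)\in\overrightarrow{p}_*\overleftarrow{\iota}^*\mathbf{H}_\pm$ iff there exists $X\in T^cN\cap S_\pm$ with $p_*X=X'$ and $p^*\xi=\flat_{\psi_N+{\rm i}\gamma_N}X$.

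The crucial step is to exploit the orthogonal decomposition $T^cN=V^c\oplus T^c\mathcal{K}$ from Proposition \ref{redcuG}, in which both summands are $F$-invariant: $V^c$ by $G$-invariance of $\gamma$ and $\omega$ together with $F=-\sharp_\gamma\circ\flat_\omega$, and $T^c\mathcal{K}$ by $G$-equivariance of $F$. Writing $X=X_V+X_\mathcal{K}$ with components in $V^c\cap S_\pm$ and $T^c\mathcal{K}\cap S_\pm$ respectively (the decomposition of $S_\pm\cap T^cN$ follows from the uniqueness of the direct sum since $F$ stabilises both summands), the condition that $\flat_{\psi_N+{\rm i}\gamma_N}X$ be a $p$-pullback forces it to vanish on every $Y\in T^c\mathcal{K}$. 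The hypothesis $i(Y)\psi=0$ for $Y\in T\mathcal{K}$ and the $\iota^*\gamma$-orthogonality $\gamma_N(X_V,Y)=0$ collapse this to $\gamma(X_\mathcal{K},Y)=0$ for every $Y\in T^c\mathcal{K}$, and since $\gamma$ is non-degenerate on the orbits we conclude $X_\mathcal{K}=0$, i.e.\ $X\in V^c\cap S_\pm$.

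It remains to invoke the descent formulas established in Proposition \ref{redcuG}: $\iota^*\gamma|_V=p^*\gamma'$, $\iota^*\psi|_V=p^*\psi'$, and $p_*\circ F|_{V^c}=F'\circ p_*$. For $X\in V^c\cap S_\pm$ these give $p_*X\in S'_\pm$ and $\flat_{\psi_N+{\rm i}\gamma_N}X=p^*\flat_{\psi'+{\rm i}\gamma'}(p_*X)$; conversely, each $X'\in S'_\pm$ lifts uniquely to an element of $V^c\cap S_\pm$, producing the required preimage in the pushforward. Applying (\ref{tau}) on $Q$ then identifies $\overrightarrow{p}_*\overleftarrow{\iota}^*\mathbf{H}_\pm$ with $\mathbf{H}'_\pm$. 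The main obstacle is precisely the vanishing $X_\mathcal{K}=0$: it is the interplay between the hypothesis $ann\,\psi\supseteq TG'$ and the non-degeneracy of $\gamma$ on the orbits that kills the $T\mathcal{K}$-component and allows the pushforward to descend cleanly to the reduced structure on $Q$.
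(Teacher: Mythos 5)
Your proof is correct and follows essentially the same route as the paper's: compute $\overleftarrow{\iota}^*\mathbf{H}_\pm=\{(X,\flat_{\iota^*\psi+{\rm i}\iota^*\gamma}X)\,/\,X\in S_\pm\cap T^cN\}$ and push forward via the descent formulas of Proposition \ref{redcuG}; you merely spell out the step the paper leaves implicit, namely that the requirement that the covector part be a $p$-pullback forces the representative into $V^c\cap S_\pm$. One small caveat: the $F$-invariance of $T^c\mathcal{K}$ comes from $T^c\mathcal{K}=(T^cN)^{\perp_\omega}\cap T^cN$ together with the $\omega$-compatibility of $F$ (as established in Proposition \ref{redcuG}), not from $G$-equivariance of $F$ alone, since $F\xi_M$ need not be an infinitesimal transformation.
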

\begin{proof} In the proposition, $\overrightarrow{p}_*$ denotes the push-forward operation defined by (e.g., \cite{BR})
\begin{equation}\label{defpush}
\overrightarrow{p}_*U=\{(p_*X,\lambda)\in\mathbf{T}^cQ\,/\,(X,p^*\lambda)\in U\}\hspace{2mm}(U\subseteq \mathbf{T}^cN).
\end{equation}
Since $F(T^cN)\subseteq T^cN$ (see the proof of Proposition \ref{redcuG}), the eigenbundles of $F|_{T^cN}$ are $S_\pm\cap T^cN$, where $S_\pm$ are the eigenbundles of $F$. Consequently, the eigenbundles of $F'$ on $Q$ are $p_*(S_\pm\cap T^cN)$. From $\mathbf{H}_\pm=\tau S_\pm$ and the definition (\ref{defback}), we get $$\overleftarrow{\iota}^*\mathbf{H}_\pm=\{(Y,\flat_{\iota^*\psi+{\rm i}\iota^*\gamma})\,/\,Y\in S_\pm\cap T^cN\}.$$ Then, (\ref{defpush}) and the definition of the mapping $\tau$ on $Q$ lead to the required result.
\end{proof}

In particular, we can define a version of the Marsden-Weinstein reduction. The real vector field $X$ of $M$ will be called a {\it Hamiltonian vector field} if $L_X\omega=0$ and there exists a complex valued function $f$ such that $df=i(X)\omega$. (If $d\omega=0$, the first condition follows from the second. Otherwise, it may be very restrictive \cite{Valmsympl}.) The structure-preserving action of $G$ on $(M,\mathcal{F},\mathcal{J})$ is a {\it Hamiltonian action} if the infinitesimal transformations $\xi_M$ are Hamiltonian for all $\xi\in\mathfrak{g}$. Furthermore, an {\it equivariant momentum map} is an equivariant mapping $\mu:M\rightarrow\mathfrak{g}^{*c}$ (i.e., $\forall g\in G$, $\mu(g(x))=(coad\,g)(\mu(x))$) such that, $\forall\xi\in\mathfrak{g}$, $$L_{\xi_M}\omega=0,\;i(\xi_M)\omega=d\mu^\xi,\hspace{3mm}\mu^\xi(x) =<\mu(x),\xi>,\;x\in M.$$

If $\mu$ is an equivariant momentum map, a level set $N=\mu^{-1}(\theta)$, where $\theta\in\mathfrak{g}^{*c}$ is a non-critical value of $\mu$ may be a reducing submanifold of $M$. Indeed, take the subgroup $G'=G_\theta$, where $G_\theta$ is the isotropy subgroup of $\theta$ for the coadjoint action of $G$. By the equivariance of $\mu$, $N$ is $G'$-invariant and, for all $x\in N$, the orbits satisfy condition (iii) of a reducing submanifold, $G_\theta(x)=G(x)\cap N$. Furthermore, at $x\in N$, take the tangent vector $\xi_M(x)$ of $G(x)$ ($\xi\in\mathfrak{g}$) and the complex tangent vector $X$ of $N$. Then,
$$\omega(\xi_M(x),X)=<i(\xi_M(x))\omega,X>=<d_x\mu^\xi,X>=
<d_x\mu(X),\xi>=0,$$
whence we see that $X\perp_\omega T_xG(x)$ is equivalent to $d_x\mu(X)=0$. Accordingly, $T^cG(x)=(T^cN)^{\perp_\omega}$ and condition (ii) of a reducing submanifold holds. If we ask the action of $G_\theta$ on $N$ to be proper and free (condition (i)), then, $N$ is a reducing submanifold. Furthermore, if the other hypotheses of Proposition \ref{redcuG} are added, we get a reduction that may be seen as a Marsden-Weinstein reduction.
\begin{example}\label{exmoment} {\rm Let us come back to Example \ref{exlift} and assume that the connected Lie group $G$ acts on $M$ and preserves $(\gamma,\psi,F)$. The differential of this action defines an action of $G$ on the manifold $TM$. Since for any tensor field $P$ the Lie derivative has the property $L_{X^\mathbf{c}}P^\mathbf{c}=(L_XP)^\mathbf{c}$ \cite{YI}, we have $L_{\xi_{TM}^\mathbf{c}}\gamma^\mathbf{c}=0,L_{\xi_{TM}^\mathbf{c}}\psi=0, L_{\xi_{TM}^\mathbf{c}}F=0$ and the action on $TM$ preserves the complete lift of the structure. We define a momentum map $\mu:TM\rightarrow\mathfrak{g}^{*c}$ in the following way
\begin{equation}\label{muTM} <\mu(x,y),\xi>=\mu^\xi(x,y)=\omega^\mathbf{c}(\xi_{M}^\mathbf{c},E) \;((x,y)\in TM,\xi\in\mathfrak{g}^c),\end{equation}
where $E=y^i(\partial/\partial y^i)$ is the so-called Euler vector field. Putting $\alpha=i(\xi_M)\omega=\alpha_idx^i$ and taking into account the equality $i(\xi_{M}^\mathbf{c})\omega^\mathbf{c}=(i(\xi_M)\omega)^\mathbf{c}$ (which may be checked on vertical and complete lifts), we get
$$\mu^\xi=\alpha^\mathbf{c}(E)=y^k\alpha_k,\; d\mu^\xi=y^kd\alpha_k+\alpha_kdy^k =\alpha^\mathbf{c}=i(\xi_M^\mathbf{c})\omega^\mathbf{c}$$
as required for a momentum map. The $coad\,\mathfrak{g}$-equivariance of $\mu$ is a straightforward consequence of the fact that the action of $G$ on $TM$ preserves $\omega^\mathbf{c}$ and, also, preserves $E$ (for any vector field $X^\mathbf{c}$ one has $[X^\mathbf{c},E]=0$, hence $L_{\xi^\mathbf{c}}E=0$, $\forall\xi\in\mathfrak{g}$).
}\end{example}

\hspace*{7cm}{\small \begin{tabular}{l} Department of
Mathematics\\ University of Haifa, Israel\\ E-mail:
vaisman@math.haifa.ac.il \end{tabular}}
\end{document}